\documentclass[12pt,a4paper]{amsproc}
\usepackage[english]{babel}
\usepackage{latexsym}
\usepackage{amsmath,enumitem,amssymb}
\usepackage{amscd}
\usepackage{xcolor}
\usepackage{todonotes}
\usepackage{marginnote}
\usepackage[colorlinks=false]{hyperref}
\usepackage[cp850]{inputenc}
\usepackage[mathscr]{eucal}
\theoremstyle{plain}

\newtheorem{quest}{Question}

\newtheorem{defin}[quest]{Definition}
\newtheorem{theorem}[quest]{Theorem}
\newtheorem{prop}[quest]{Proposition}
\newtheorem{corollary}[quest]{Corollary}
\newtheorem{lemma}[quest]{Lemma}

\newtheorem*{prop*}{Proposition}
\newtheorem*{defin*}{Definition}
\theoremstyle{remark}
\newtheorem{example}[quest]{Example}
\newtheorem{remark}[quest]{Remark}

\newcommand{\adef}{\begin{defin}}
\newcommand{\zdef}{\end{defin}}

\newcommand{\aproof}{\begin{proof}}
\newcommand{\zproof}{\end{proof}}

\begin{document}

\title[On measurability of Kurzweil--Stieltjes integrable functions on compact lines]
{On measurability of Kurzweil--Stieltjes integrable functions on compact lines}

\author{Leandro Candido}
\address{Universidade Federal de S\~ao Paulo - UNIFESP. Instituto de Ci\^encia e Tecnologia. Departamento de Matem\'atica. S\~ao Jos\'e dos Campos - SP, Brasil}
\email{\texttt{leandro.candido@unifesp.br}}
\author{Pedro L. Kaufmann}
\address{Universidade Federal de S\~ao Paulo - UNIFESP. Instituto de Ci\^encia e Tecnologia. Departamento de Matem\'atica. S\~ao Jos\'e dos Campos - SP, Brasil}
\email{\texttt{plkaufmann@unifesp.br}}

\thanks{ The authors are supported by Funda\c c\~ao de Amparo \`a Pesquisa do Estado de S\~ao Paulo - FAPESP No. 2023/12916-1 }

%    General info
\subjclass{Primary 26A39, 54F05; Secondary 28A25, 26A42.}

\keywords{Kurzweil--Stieltjes integral; compact lines; Radon measures; measurable functions; functions of bounded variation; Hake theorem}

\maketitle
\begin{abstract}
We continue the study on Kurzweil--Stieltjes integration on compact lines initiated in \cite{CanKau}. Given a real valued function $G$ on a compact line, the presented integral is called the Kurzweil--Stieltjes integral with respect to $G$, or simply the $G$-integral.  %Given a compact line $K$ and a right-continuous function $G:K\to\mathbb{R}$ of bounded variation, we consider the Radon measure $\mu_G$ naturally induced by $G$.
Our main results concern the relationship between $G$-integrability and measurability. We prove that, whenever $G$ is nondecreasing, every $G$-integrable function is $\mu_G$-measurable, where $\mu_G$ is the natural Radon measure induced by $G$. We also show that, for an arbitrary $G$ of bounded variation, every bounded $G$-integrable function is $\mu_G$-measurable.
%, where $|\mu_G|$ denotes the total variation measure of $\mu_G$. 
As an application, we provide a full characterization of Lebesgue integrablility with respect to Radon measures in terms of the $G$-integral, and demonstrate that the $G$-integral represents an extension of the Lebesgue integral with respect to $\mu_G$ for suitable $G$.

In addition, we establish a version of Hake's theorem for the $G$-integral in this setting. 
\end{abstract}

\section{Introduction}
The Henstock--Kurzweil integral \cite{Henstock1968}, \cite{Kurzweil1957} is an important analytic tool, as it extends the concept of integration on compact real intervals preserving the classical intuition underlying the Riemann integral based on Riemann sums, while at the same time it presents several remarkable new properties. In particular, the Henstock--Kurzweil integral generalizes the Lebesgue integral with respect to Lebesgue measure. However, unlike Lebesgue integrability, Henstock--Kurzweil integrability is closed by improper integration and allows for a more general Fundamental Theorem of Calculus. In addition, several convergence theorems such as the Monotone Covergence and the Dominated Convergence Theorems still hold for the Henstock--Kurzweil integral. On the other hand, a rich theory and vast set of applications emerge from the fact that, unlike the Lebesgue integral, the Henstock--Kurzweil integral is \emph{nonabsolute}, meaning that the integrability of a function $f$ does not guarantee the integrability of $|f|$.  %allows the integration of functions that lie beyond the scope of the Riemann and even Lebesgue theories; 
See, for example, \cite{Bartle}, \cite{KurtzSwartz}, \cite{MST} and \cite{Pfeffer}.

Over the past decades, several extensions and generalizations of the Henstock--Kurzweil integral have been developed; see, for example, \cite{BianconiKaufmann}, \cite{Lee}, and, notably, the development of $\Delta$- and $\nabla$-integrals on time scales in \cite{BohnerPeterson} and \cite{PetersonThompson}. The linear order of the real line plays a central role in the development of Henstock--Kurzweil integration on real compact intervals and its main time scale variants, as their nonabsoluteness is connected to conditionality. With this in account, in \cite{CanKau} we introduced a notion of Kurzweil--Stieltjes integration for functions defined on compact lines, extending the ideas of Henstock and Kurzweil to a broader class of ordered spaces.

In this paper, we continue the study initiated in \cite{CanKau} of the Kurzweil--Stieltjes integral, hereafter called the $G$-integral, on compact linearly ordered topological spaces, hereafter called compact lines. This class includes the usual compact intervals of the real line, but also nonmetrizable ordered spaces, such as compact ordinal spaces and richer examples such as the double arrow space; see \cite{Engelking}.

Integration on compact lines shares many similarities with the usual process of Henstock--Kurzweil integration on the real line. However, important differences arise as well. In particular, points may carry mass and may be isolated, which requires several classical arguments to be adapted to this setting. Moreover, compact lines need not be separable, first contable, or metrizable in general. This requires a new set of techniques for the development of a satisfactory theory and, on the other hand, reveals how much of the theory of Henstock--Kurzweil integration relies, actually, solely on order and completeness.

Our aim is to strengthen the connection between $G$-integration on compact lines and measure theory, more specifically the theory of Radon measures. The main point of contact between this topics relies on the fact that every right-continuous function $G$ of bounded variation on a compact line $K$ naturally induces a unique Radon measure $\mu_G$ on $K$; see \cite[Theorem 2.4.11]{Ronchim}, and also \cite{RonchimTausk} for applications to extensions of operators on Banach spaces of continuous functions). The collection of such functions forms a Banach space when endowed with a suitable norm, and will be denoted by $\mathrm{NBV}(K)$. The identification of $\mathrm{NBV}(K)$ with the Banach space $\mathcal{M}(K)$ of Radon measures on $K$ via the isometric isomorphism $G\mapsto \mu_G$ will be highlighted in the next section and is a crucial ingredient in the present work. Hereafter, by $\mu_G$ we always mean the Radon measure induced by $G\in \mathrm{NBV}(K)$. 

In \cite{CanKau}, a partial characterization was obtained relating Kurzweil--Stieltjes integration with respect to a function $G\in \mathrm{NBV}(K)$ on compact lines to (Lebesgue) integration with respect to $\mu_G$. This characterization is partial because measurability of the integrand and monotonicity of the integrator were assumed; see \cite[Theorem 5.6]{CanKau}. Our first result removes the measurability hypothesis in the monotone case.

\begin{theorem}\label{Thm:Main1}
Let $K$ be a compact line and let $G:K\to \mathbb{R}$ be a nondecreasing right-continuous function. If $f:K\to \mathbb{R}$ is $G$-integrable, then $f$ is $\mu_G$-measurable.
\end{theorem}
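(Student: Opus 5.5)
A natural route is the classical one for Henstock--Kurzweil integrable functions on $[a,b]$: show that $f$ is a pointwise limit $\mu_G$-almost everywhere of measurable functions built from Riemann-type sums. A cleaner variant works through the indefinite integral and a differentiation argument. I would follow the approach of approximating $f$ by step functions associated with fine partitions. For each $n$, use $G$-integrability to fix a gauge $\delta_n$ such that every $\delta_n$-fine tagged partition $P$ has Riemann--Stieltjes sum within $2^{-n}$ of $\int f\,dG$. By Saks--Henstock (available from \cite{CanKau}), this gives
\[
\sum_{(t,I)\in P}\bigl|f(t)\,\Delta G(I)-F(I)\bigr|\le 2^{-n+1}
\]
for every $\delta_n$-fine partial partition, where $F$ is the indefinite $G$-integral. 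The aim is to show that $f$ agrees $\mu_G$-a.e.\ with a Borel function $g$, obtained as a limit of quotients $F(I)/\mu_G(I)$ over shrinking intervals $I$ around the point; the atoms of $\mu_G$ are handled separately.

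First, I would dispose of atoms. Since $\mu_G$ is finite, it has at most countably many atoms. Every function is measurable on a countable set, and points where $G$ jumps (from the left, since $G$ is right-continuous) are harmless. So it suffices to prove that $f$ restricted to the complement $D$ of the atoms is measurable with respect to the completion of $\mu_G$. Next, I would split by the sign of the increments of $G$. Points interior to intervals where $G$ is constant form a $\mu_G$-null open set, and gaps of $K$ (pairs of consecutive points) are countable for the part carrying mass, so they can be removed too.

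The core step is a Vitali-type covering argument on the line. For rationals $r$, consider $E_r=\{t\in D: f(t)>r\}$. I would show that $E_r$ is $\mu_G$-measurable by approximating it from outside by open sets. For each $n$, let $U_{n}$ be the union of the $\delta_n$-fine intervals $I\ni t$, $t\in E_r$, satisfying $F(I)> r\,\mu_G(I)-\varepsilon_n$-type conditions. Then use Saks--Henstock to show that $\mu_G^*(U_n\setminus E_r)\to 0$. In a linearly ordered compact space, Vitali covering reduces to a selection of disjoint intervals from a fine cover. The compactness and order completeness used in \cite{CanKau} to prove Cousin's lemma should supply this. Fix a compact set $C\subset U_n\setminus E_r$ and apply Cousin's lemma relative to $C$: cover $C$ by finitely many tagged intervals. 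The Henstock inequality then bounds the discrepancy between $f$-values on $E_r$ and the integral by $2^{-n}$, forcing small measure.

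The main obstacle is exactly this covering/differentiation step. In a general compact line there is no Lebesgue differentiation theorem available a priori, and no first countability, so sequential shrinking intervals are unavailable. I would try to avoid differentiation entirely. Instead I would prove the inner/outer estimate $\mu_G^*(E_r)=\mu_{G*}(E_r)$ directly, using Radon regularity: inner measure is approximated by compacts, and Cousin's lemma on compacts replaces Vitali. If this proves too delicate, the fallback is to first show that $f\cdot\chi_{\{|f|\le N\}}$ is handled by the bounded case via a McShane-type comparison, and then let $N\to\infty$.
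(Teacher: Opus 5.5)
Your sketch has a genuine gap at exactly the step you flag as the main obstacle, and the replacement you propose does not close it. Everything rests on the claim that Saks--Henstock forces $\mu_G^*(U_n\setminus E_r)\to 0$. This is asserted, not derived, and the tools you name cannot derive it.

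Inner regularity (compact $C\subset U_n\setminus E_r$) only bounds the inner measure of $U_n\setminus E_r$. Since measurability of $E_r$ is what you are trying to prove, small inner measure says nothing about the outer measure you need.

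Cousin's lemma does not help either. It only yields a $\delta_n$-fine partition or finite cover. That cover has no disjointness from the intervals tagged in $E_r$ that make up $U_n$, and it does not exhaust an arbitrary set. To turn the Saks--Henstock bound on $\sum|f(t)(G(z)-G(y))-(F(z)-F(y))|$ into an outer-measure estimate, you must select pairwise disjoint tagged intervals from a fine cover of a possibly nonmeasurable set, leaving a remainder of small outer measure. That is a Vitali-type theorem: \cite[Theorem 3.26]{CanKau}, with admissible coverings by intervals of positive $\mu_G$-measure. It is exactly what the paper uses in Theorem~\ref{Thm:differentiation} to get $\frac{dF}{dG}=f$ off a $G$-null set. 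So a differentiation theorem is available, and avoiding it is not a simplification.

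Your fallback also fails. $f\chi_{\{|f|\le N\}}$ need not be $G$-integrable, because the integral is nonabsolute and $\{|f|\le N\}$ is not known to be measurable. In any case, the bounded case is no easier.

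The second missing ingredient is a way to get countably many Borel approximants without first countability. You note this obstruction but do not resolve it. The paper shrinks intervals by $G$-increments rather than topologically. The maps $u_n(x)=\sup\{y<x:G(x)-G(y)\ge 2^{-n}\}$ and $v_n(x)=\inf\{y>x:G(y)-G(x)\ge 2^{-n}\}$ are nondecreasing from $K$ to $K$, hence Borel. So the difference quotients of $F$ over $[u_n(x),x]$, $[x,v_n(x)]$ and $[u_n(x),v_n(x)]$ give Borel functions $f_n$.

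The paper then removes two exceptional sets. The first is countable: the atoms, the left endpoints of plateaus of the form $[x,b)$, and $0_K,1_K$. The second is the $G$-null set where $\frac{dF}{dG}=f$ fails, which contains the plateau interiors. Every remaining $x$ is a continuity point of $G$ with either $G^{-1}(\{G(x)\})=\{x\}$ or $x$ an endpoint of a nondegenerate closed plateau. There Lemma~\ref{Lem:Intersection} gives $u_n(x)\nearrow x$ or $v_n(x)\searrow x$ with strictly positive $G$-increments, so $f_n(x)\to f(x)$. Then $\limsup_n f_n$ is Borel and agrees with $f$ off a countable set union a null set.

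Your outline contains neither the covering theorem nor this $G$-adapted countable shrinking scheme, so as it stands it does not prove the theorem.
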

In particular, combining with results from \cite{CanKau}, we establish the $G$-integral as a proper extension of the Lebesgue integral with respect to positive Radon measures, see Corollary \ref{Cor:CharacterizationIncreasingMeasurability}. We also obtain a partial extension to the general case in which $G$ is not necessarily monotone.

\begin{theorem}\label{Thm:Main2}
Let $K$ be a compact line and let $G\in \mathrm{NBV}(K)$. %Let $\mu_G$ be the unique Radon measure on $K$ induced by $G$. 
If $f$ is $G$-integrable and bounded, then $f$ is $\mu_G$-measurable. Moreover, if $f$ and $|f|$ are $G$-integrable, then $f$ is $\mu_G$-measurable.
\end{theorem}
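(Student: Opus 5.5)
We would derive Theorem \ref{Thm:Main2} from Theorem \ref{Thm:Main1} by a Jordan-type decomposition of the integrator. We would write $G = G_1 - G_2$ with $G_1, G_2$ nondecreasing and right-continuous. The natural choice is $G_1 = V_G$ (the variation function, normalized so that it lies in $\mathrm{NBV}(K)$) and $G_2 = V_G - G$. Then $\mu_{G_1} = |\mu_G|$, and $\mu_G$ is absolutely continuous with respect to $\mu_{G_1}$. Hence every $\mu_{G_1}$-measurable function is $\mu_G$-measurable, and it suffices to show that $f$ is $V_G$-integrable. Once that is established, Theorem \ref{Thm:Main1} applied to the nondecreasing function $V_G$ gives $\mu_{V_G}$-measurability, hence $\mu_G$-measurability. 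The isometric identification $G\mapsto\mu_G$ between $\mathrm{NBV}(K)$ and $\mathcal{M}(K)$ is what guarantees that $\mu_{V_G} = |\mu_G|$.

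The key step is the transfer of integrability from $G$ to $V_G$. Here we would use a Saks--Henstock type lemma for the $G$-integral on compact lines, proved in \cite{CanKau}. Given a gauge $\delta$ adapted to $\varepsilon$, that lemma gives, for every $\delta$-fine partial division $\{(t_i,[a_i,b_i])\}$,
\[
\sum_i \Bigl| f(t_i)\bigl(G(b_i)-G(a_i)\bigr) - \int_{a_i}^{b_i} f\,dG \Bigr| \le 4\varepsilon .
\]
For bounded $f$ with $|f|\le M$, we would then compare the Riemann sums $\sum f(t_i)\,(V_G(b_i)-V_G(a_i))$ with the corresponding sums for $G$. On each subinterval, $V_G(b_i)-V_G(a_i)$ is approximated by $\sum |G(d_j)-G(c_j)|$ over a fine subpartition of $[a_i,b_i]$. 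Boundedness lets us control the error by $M$ times the variation defect, which can be made small by refining the gauge.

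A cleaner alternative, which we would try first, is to avoid $V_G$-integrability of $f$ altogether and go through indefinite integrals. Define $F(x)=\int_{\min K}^x f\,dG$. We would show that $f$ equals, $|\mu_G|$-almost everywhere, a Radon--Nikodym-type derivative of the measure induced by $F$ with respect to $|\mu_G|$, or equivalently that $f$ agrees $|\mu_G|$-a.e. with a pointwise limit of Borel step functions built from the partitions. In the monotone case this is presumably how Theorem \ref{Thm:Main1} is proved. For signed $G$, boundedness of $f$ ensures that $F$ is of bounded variation with $|dF|\le M\,d V_G$, so $F$ induces a Radon measure $\nu\ll|\mu_G|$. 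Let $h = d\nu/d\mu_G$. We would then show that $fg$ is $V_G$-integrable, where $g = d\mu_G/d|\mu_G|$ takes values in $\{\pm1\}$ and is Borel, and compare $fg$ with $h$ via Hake/Saks--Henstock to conclude $f = h$ a.e.

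For the second assertion, suppose $f$ and $|f|$ are both $G$-integrable. We would reduce to the bounded case by truncation: $f_n = \max(-n,\min(f,n))$ can be written using $f$, $|f|$, and constants, as $f_n = \tfrac12(f+|f|)\wedge n - \tfrac12(|f|-f)\wedge n$. We would show each $f_n$ is $G$-integrable by a dominated-convergence-type argument in the $G$-integral, or by noting that $f^\pm\wedge n$ is integrable whenever $f^\pm$ is, via the lattice property of absolutely integrable functions. Then $f_n$ is $\mu_G$-measurable by the first part, and $f=\lim f_n$ pointwise, so $f$ is $\mu_G$-measurable.

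The main obstacle is the transfer step: showing that $G$-integrability of a bounded $f$ yields $V_G$-integrability, or $V_G$-integrability of $fg$. The sign-density $g$ is only Borel, not a step function, so the gauge must be adapted to a closed/open approximation of the sets $\{g=\pm1\}$ using regularity of $|\mu_G|$. Points of mass and isolated points of the compact line require separate handling in this argument.
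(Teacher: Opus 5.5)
There is a genuine gap in the bounded case. The transfer step, which you yourself flag as the main obstacle, is never carried out, and your first route to it fails. The sums $\sum_i f(t_i)\bigl(T_G(x_i)-T_G(x_{i-1})\bigr)$ (your $V_G$ is the paper's $T_G$) and $\sum_i f(t_i)\bigl(G(x_i)-G(x_{i-1})\bigr)$ are not close in general, and the variation defect cannot make them close. It only compares $T_G(x_i)-T_G(x_{i-1})$ with $|G(x_i)-G(x_{i-1})|$. So at best it relates the $T_G$-sum of $f$ to a $G$-sum in which each $f(t_i)$ is multiplied by the sign of the $i$-th increment, and that is not a Riemann sum of $f$. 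For $K=[0,1]$ and $G(x)=-x$ the defect is zero, yet the two sums differ by $2\sum_i f(t_i)(x_i-x_{i-1})$.

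The correct object is $fg$ with $g=d\mu_G/d|\mu_G|$, as in your second route. There, however, you leave its $T_G$-integrability open and propose gauges built from closed/open approximations of $\{g=\pm1\}$, plus a Radon--Nikod\'ym comparison with $F$ via Hake. None of this is carried out, and none of it is needed. The missing idea, which is exactly how the paper proceeds, is that $g\in L^1(\mu_{T_G})$ is itself $T_G$-integrable by Corollary~\ref{Cor:CharacterizationIncreasingMeasurability}. Its indefinite integral is $\int_{0_K}^x g\,dT_G=\mu_G([0_K,x])=G(x)$. The Saks--Henstock lemma for $g$ then gives, for an $\eta$-gauge $\delta_g$ of $g$ and every $\delta_g$-fine partition $P$,
\[
\sum_i\Bigl|g(t_i)\bigl(T_G(x_i)-T_G(x_{i-1})\bigr)-\bigl(G(x_i)-G(x_{i-1})\bigr)\Bigr|\le 4\eta .
\]
Hence $|S(f,G,P)-S(fg,T_G,P)|\le 4M\eta$ when $|f|\le M$; this is Lemma~\ref{Lem:Substitution}. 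So $fg$ is $T_G$-integrable, Theorem~\ref{Thm:Main1} makes it $\mu_{T_G}$-measurable, and $f=(fg)g$ is $|\mu_G|$-measurable. The sign density never has to be approximated by hand.

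For the second assertion, the $G$-integrability of the truncations $f_n$ is asserted, not proved. For a signed integrator neither a lattice property nor dominated convergence is available a priori; classically both come from the identification with Lebesgue integrability, which is precisely what is at stake here. The paper instead passes through $T_G$: it applies Lemma~\ref{Lem:Substitution} to $f-m$ and $|f|+m$, then uses \cite[Corollary~4.3]{CanKau} and Proposition~\ref{Prop:TIntegrableImpliesGIntegrable} to return to $G$. Since $f-m$ and $|f|+m$ are unbounded, that step really needs the substitution lemma without its boundedness hypothesis.

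That extension is easy to supply. On $E_k=\{k-1\le|f|<k\}$, let the gauge be an $\eta_k$-gauge for $g$, with $\sum_k 4k\eta_k<\varepsilon$. Group the tags of a fine partition according to the sets $E_k$ and apply Saks--Henstock to each group. This yields $|S(f,G,P)-S(fg,T_G,P)|<\varepsilon$ for arbitrary $f$. With that in hand, the bounded-case argument above applies directly to every $G$-integrable $f$, and no truncation is needed at all.
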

From Theorem \ref{Thm:Main2}, we also derive a characterization of Lebesgue integration with respect to  arbitrary Radon measures in terms of the $G$-integral, see Corollary \ref{Cor:CharacterizationMeasurability}.

To conclude, we present a version of Hake's theorem in this setting, demonstrating that the $G$-integral is closed by improper integration for a wide class of integrators.  

\begin{theorem}[Hake theorem]\label{Thm:Hake}
Let $K$ be a compact line, let $G:K\to\mathbb{R}$ be a right-continuous nondecreasing function, and let $f:K\to\mathbb{R}$ be a function. Assume that $1_K$ is left-dense and that there exists $A\in\mathbb{R}$ such that, for every $y\in(0_K,1_K)$, the function $f$ is $G$-integrable on $[0_K,y]$ and
\[
A=\lim_{y \nearrow 1_K} \int_K f_{[0_K,y]} \, dG
\quad
\left(=\lim_{y \nearrow 1_K} \int_{0_K}^y f \, dG\right).
\]
Then $f$ is $G$-integrable on $K$ and
\[
\int_K f \, dG
=
A+f(1_K)(G(1_K)-L_G(1_K)).
\]

Similarly, assume that $0_K$ is right-dense and that there exists $B\in\mathbb{R}$ such that, for every $y\in(0_K,1_K)$, the function $f$ is $G$-integrable on $(y,1_K]$ and
\[
B=\lim_{y \searrow 0_K} \int_K f_{(y,1_K]} \, dG.
\]
Then $f$ is $G$-integrable on $K$ and
\[
\int_K f \, dG
=
B-f(0_K)G(0_K).
\]
\end{theorem}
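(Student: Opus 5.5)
We prove the first assertion (approach to $1_K$ from the left); the second is symmetric. We use the definitions from \cite{CanKau}: gauges $\delta$ assign to each $t\in K$ an open convex neighbourhood; tagged divisions consist of tags $t_i$ and intervals $(x_{i-1},x_i]$, with the first interval containing $0_K$ (where the mass $G(0_K)$ sits) treated as in \cite{CanKau}; Riemann sums are $\sum f(t_i)(G(x_i)-G(x_{i-1}))$. We also use the Saks--Henstock lemma and additivity of the integral over adjacent subintervals, both established in \cite{CanKau}. The expected identity comes from the jump at $1_K$: the left limit $L_G(1_K)=\lim_{y\nearrow 1_K}G(y)$ exists because $G$ is monotone, and the point $1_K$ contributes $f(1_K)(G(1_K)-L_G(1_K))$.

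\textbf{Step 1: a chain approaching $1_K$.} Since $K$ need not be first countable, we cannot pick an increasing sequence $y_n\nearrow 1_K$. Instead we work with nets. Fix $\varepsilon>0$ and choose $c<1_K$ such that for all $y\in[c,1_K)$:
\[
\Bigl|\int_{0_K}^y f\,dG-A\Bigr|<\varepsilon
\quad\text{and}\quad
G(y)>L_G(1_K)-\varepsilon .
\]
By additivity, for $c\le u<v<1_K$ we get $\bigl|\int_u^v f\,dG\bigr|<2\varepsilon$. This uniform smallness of tail integrals replaces the series of tail estimates in the classical Hake proof.

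\textbf{Step 2: building the gauge.} For each "block" we take a gauge $\delta_y$ on $[0_K,y]$ that is $\varepsilon$-fine for $\int_{0_K}^y f\,dG$.
\begin{enumerate}[label=(\roman*)]
\item On $[0_K,c]$ we use $\delta_c$.
\item On $(c,1_K)$ we avoid a countable decomposition. By Zorn's lemma (or transfinite recursion) we take a well-ordered cofinal chain $c=z_0<z_1<\dots<z_\alpha<\dots$ in $[c,1_K)$. Since each point $t\in(c,1_K)$ lies in a unique block $(z_\alpha,z_{\alpha+1}]$, every $t$ has a well-defined block. At limit ordinals $\lambda$ with $z_\lambda=\sup_{\alpha<\lambda}z_\alpha$ we need separate care.
\item On the block $(z_\alpha,z_{\alpha+1}]$ we use a gauge $\varepsilon_\alpha$-fine, where $\sum\varepsilon_\alpha$ is replaced by requiring $\delta(t)$ to stay inside the block. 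The Saks--Henstock lemma then bounds each partial sum by $\varepsilon_\alpha$.
\end{enumerate}
Because uncountable sums $\sum\varepsilon_\alpha$ are a problem, I would instead exploit the fact that any tagged division is finite. It therefore meets only finitely many blocks, so the weights $\varepsilon_\alpha$ can be allotted by giving the $k$-th met block weight $2^{-k}\varepsilon$ relative to a fixed enumeration. Alternatively, we directly use Saks--Henstock on the whole partial interval $[c,x_{n-1}]$ by defining $\delta$ on $(c,1_K)$ as a gauge fine for $\int_c^{v}$ simultaneously. Such simultaneous choice is exactly the difficulty.

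\textbf{Step 3: forcing the tag at $1_K$.} At $1_K$ we set $\delta(1_K)=(c',1_K]$ with $c'\ge c$ so small that
\[
|f(1_K)|\,\bigl(L_G(1_K)-G(c')\bigr)<\varepsilon .
\]
We also make $\delta(t)\not\ni 1_K$ for $t<1_K$, which forces $1_K$ to be the tag of the last interval $(x_{n-1},1_K]$. The last term is then
\[
f(1_K)\bigl(G(1_K)-G(x_{n-1})\bigr)
= f(1_K)\bigl(G(1_K)-L_G(1_K)\bigr)+O(\varepsilon).
\]
The remaining sum over $[0_K,x_{n-1}]$ is compared with $\int_{0_K}^{x_{n-1}}f\,dG$, which is within $\varepsilon$ of $A$ since $x_{n-1}\ge c'\ge c$.

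\textbf{Main obstacle.} The hard step is the Saks--Henstock estimate for the partial division of $[0_K,x_{n-1}]$. It must hold with a single gauge for arbitrary $x_{n-1}\in[c,1_K)$, without countable cofinality. I would try the following fix. The finitely many blocks met by the division are ordered, and within each block Saks--Henstock gives an error of at most $\varepsilon_\alpha$. The last block is only partially covered, but its partial integral is controlled by Step 1's bound $2\varepsilon$ on tail integrals. The total error is then bounded by
\[
\varepsilon + \sum_{k}2^{-k}\varepsilon + 2\varepsilon ,
\]
where the weights $2^{-k}$ are assigned dynamically via a fixed well-order of blocks. For this to work, the per-block tolerance must be independent of the division. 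If that fails, I would instead choose the blocks by a countable-cofinality argument in the case where the cofinality of $1_K$ is $\omega$. When the cofinality is uncountable, every gauge $\delta(1_K)$ eventually swallows a tail, and I would use the Step 1 uniform estimate directly.
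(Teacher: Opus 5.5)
Your proposal does not yet contain a proof. Step 2 does not produce a gauge, and you essentially concede this.
\begin{itemize}
\item A gauge must be fixed before the division is chosen. The tolerance used at points of a block therefore cannot depend on how many blocks a particular division meets, so ``give the $k$-th met block weight $2^{-k}\varepsilon$'' is not a definition of $\delta$.
\item Uncountably many blocks admit no summable choice of tolerances.
\item At a limit stage $z_\lambda$, every interval $\delta(z_\lambda)$ meets infinitely many earlier blocks, so block-wise Saks--Henstock is unavailable there.
\end{itemize}
Your fallback, splitting according to the cofinality of $1_K$, is the right dichotomy. Its countable branch is essentially the paper's argument:
\begin{itemize}
\item take blocks $I_1=[c_0,c_1]$ and $I_k=(c_{k-1},c_k]$, with $\varepsilon/2^{k+1}$-gauges for $f_{I_k}$;
\item shrink these gauges so that each $c_k$ is forced to be a tag;
\item use right-continuity of $G$ so that the term $f(c_k)(G(x)-G(c_k))$ from an interval $[c_k,x]$ spilling into the next block is small;
\item finish with Saks--Henstock on the last, partially covered block.
\end{itemize}
You should make the forced tags and the right-continuity control explicit.

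The genuine gap is the uncountable branch. The remark that $\delta(1_K)$ swallows a tail holds for every gauge on every compact line, so it distinguishes nothing. The Step~1 estimate controls only the numbers $\int_{0_K}^y f\,dG$. It does not control Riemann sums of $\delta$-fine divisions of $[0_K,x_{n-1}]$ uniformly in $x_{n-1}$, which is precisely the uniformity problem you identified. So ``use the Step 1 estimate directly'' proves nothing.

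The missing idea is that uncountable cofinality forces $G$ to be constant near $1_K$. Pick $y_n<1_K$ with $G(y_n)>L_G(1_K)-1/n$. Then $y^*=\sup_n y_n<1_K$, and monotonicity gives $G\equiv L_G(1_K)$ on $[y^*,1_K)$. This is what the paper's Lemma~\ref{Lem:FirstCountableCompactLines} encodes: either $G$ is constant on $V\setminus\{1_K\}$ for some neighbourhood $V$ of $1_K$, or $1_K$ has a countable local basis.

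Once $G$ is constant on $[y^*,1_K)$, the problem is trivial:
\begin{itemize}
\item Require $1_K\notin\delta(t)$ for $t<1_K$. Then every $\delta$-fine Riemann sum on $[y^*,1_K]$ equals $f(y^*)G(y^*)+f(1_K)(G(1_K)-L_G(1_K))$, so $f$ is $G$-integrable on $[y^*,1_K]$.
\item Additivity (\cite[Theorem 3.16]{CanKau}) with $[0_K,y^*]$ gives integrability on $K$.
\item $\int_{0_K}^y f\,dG=\int_{0_K}^{y^*}f\,dG$ for all $y\in[y^*,1_K)$. This common value is $A$, and the formula follows.
\end{itemize}
The same observation, using right-continuity at $0_K$, is needed for the second part. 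Without it, your argument does not cover compact lines in which $1_K$ (or $0_K$) is not first countable.
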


The paper is organized as follows. In Section~\ref{Sec:Prelim}, we briefly recall the concept of $G$-integration. In Section~\ref{Sec:Diff}, we revisit the notion of $G$-differentiability and the Fundamental Theorem of Calculus, which forms the core of the arguments developed later in Section~\ref{Sec:Measurable}. Section~\ref{Sec:Measurable} is devoted to measurability results for nondecreasing functions $G$ and to the proof of Theorem~\ref{Thm:Main1}. In Section~\ref{Sec:Hake}, we establish Theorem~\ref{Thm:Hake}. Finally, in Section~\ref{Sec:MeasurableNBV}, we use results from the previous sections to investigate consequences for general integrators in $\mathrm{NBV}(K)$, obtaining Theorem~\ref{Thm:Main2} and further applications.

\section{Basic terminology and preliminaries}
\label{Sec:Prelim}
The purpose of this section is to introduce the main objects used throughout the paper. In general, we follow the terminology established in \cite{CanKau}. By a \emph{compact line} we mean a compact linearly ordered topological space. If $K$ is a compact line, we denote its minimum and maximum elements by $0_K$ and $1_K$, respectively. Throughout this paper, we always assume that $0_K<1_K$ in order to avoid trivial cases. Given a point $x\in K$, if $x$ is left-isolated, we denote its immediate predecessor by $x^-$. If $x$ is right-isolated, we denote its immediate successor by $x^+$. A point $x\in(0_K,1_K]$ that is not left-isolated is said to be \emph{left-dense}. Similarly, a point $x\in[0_K,1_K)$ that is not right-isolated is said to be \emph{right-dense}; see \cite[Definition~2.1]{CanKau}.

We use the standard interval notation: parentheses denote open endpoints, and brackets denote closed endpoints. Unless explicitly stated otherwise, when we refer to open or closed intervals, we always mean them in the topological sense.

Given a compact line $K$, a \emph{tagged partition} (which we will simply call a partition) is any collection $P=\{([x_{i-1},x_i],t_i):i=1,\ldots,n\}$ such that
\begin{equation}\label{Rel:11111111}
0_K=x_0\leq x_1\leq \cdots \leq x_n=1_K,
\end{equation}
and $t_i\in [x_{i-1},x_i]$ for each $i$. Any collection of points $D=\{x_0,\ldots,x_n\}$ satisfying \eqref{Rel:11111111} will be called a division of $K$.

A \emph{gauge} on $K$ is a function $\delta$ that assigns to each $x\in K$ an open interval $\delta(x)$ containing $x$. We say that the partition $P$ is \emph{$\delta$-fine} if for every $1\leq i\leq n$ one has $[x_{i-1},x_i]\subset \delta(t_i)$.

Given functions $f$ and $G$ on $K$ and a partition $P=\{([x_{i-1},x_i],t_i):i=1,\ldots,n\}$, the \emph{Riemann sum} of $f$ with respect to $G$ and $P$ is defined by
\[
S(f,G,P)=f(0_K)G(0_K)+\sum_{i=1}^n f(t_i)\bigl(G(x_i)-G(x_{i-1})\bigr).
\]

We say that $f$ is \emph{Kurzweil--Stieltjes integrable} with respect to $G$, or simply $G$-integrable, if there exists $A\in\mathbb{R}$ such that for every $\varepsilon>0$ there exists a gauge $\delta$ on $K$ with the property that, for every $\delta$-fine partition $P$, one has
\[
|S(f,G,P)-A|<\varepsilon.
\]
In this case, we write
\[
A=\int_K f\,dG.
\]

For simplicity of notation, for each $\varepsilon>0$ we will say that a gauge $\delta$ is an $\varepsilon$-gauge for $f$ if $\left|S(f,G,P)-\int_K f\,dG\right|<\varepsilon$ whenever $P$ is a $\delta$-fine partition of $K$.

In this paper, we restrict our attention to functions $G$ for which one can obtain good integration properties. The first class of functions we consider are the so-called \emph{amenable} functions, that is, functions which are regulated and right-continuous. Recall that a function $G:K\to\mathbb{R}$ is \emph{regulated} if, for every left-dense point $x\in K$, the limit $\lim_{y\nearrow x}G(y)$ exists, and if $x$ is right-dense, then $\lim_{y\searrow x}G(y)=G(x)$.

We now recall an important function introduced in \cite{CanKau}, which plays a central role in $G$-integration. If $G$ a regulated function, we define $L_G:K\to \mathbb{R}$ by
\[
L_G(x)=
\begin{cases}
0, & \text{if } x=0_K,\\[0.5ex]
\lim_{y\nearrow x} G(y), & \text{if } x\in(0_K,1_K] \text{ is left-dense},\\[0.5ex]
G(x^-), & \text{if } x\in(0_K,1_K] \text{ is left-isolated}.
\end{cases}
\]

If $G$ is amenable and has bounded variation, we say that $G$ belongs to $\mathrm{NBV}(K)$, following the terminology established in \cite{Ronchim}. We now recall the notion of variation.

\begin{defin}\label{Def:Variation}
Let $K$ be a compact line, and let $D=\{x_0,x_1,\ldots,x_n\}$ be a division of $K$. We define
\[
\operatorname{Var}(G,D)=\sum_{i=1}^n |G(x_i)-G(x_{i-1})|,
\]
and the \emph{total variation} of $G$ by $\operatorname{Var}(G)=\sup_D \operatorname{Var}(G,D)$, where the supremum is taken over all divisions $D$ of $K$. We say that $G$ is of \emph{bounded variation} if $\operatorname{Var}(G)<\infty$.
\end{defin}

The space $\mathrm{NBV}(K)$ is a Banach space when endowed with the norm
\[
\|G\|=|G(0_K)|+\operatorname{Var}(G).
\]
Its importance is highlighted by the fact, proved in \cite[Theorem~2.4.11]{Ronchim}, that $\mathrm{NBV}(K)$ is linearly and isometrically isomorphic to the space of Radon measures on $K$, denoted by $\mathcal{M}(K)$, endowed with the total variation norm.

If $G\in \mathrm{NBV}(K)$, we shall always denote by $\mu_G$ the associated Radon measure, and by $|\mu_G|$ the total variation measure of $\mu_G$. In particular, for each $x\in K$, we have
\[
\mu_G([0_K,x])=G(x), \qquad \text{and} \qquad \mu_G([0_K,x))=L_G(x).
\]

We say that a subset $A\subset K$ is \emph{$G$-null} if its outer measure with respect to $|\mu_G|$ satisfies $|\mu_G|^*(A)=0$, see \cite[Definition 3.21]{CanKau}.

\section{Fundamental Theorem of Calculus Revisited}
\label{Sec:Diff}
In this section, we revisit a differentiation result established in \cite[Theorem 4.9]{CanKau}, which is fundamental for the developments of the present paper. We observed a gap in the original proof, specifically in the argument for Claim~1 concerning the use of the Vitali-type covering theorem \cite[Theorem 3.26]{CanKau}. For this reason, we provide below a revised and self-contained proof.

For the convenience of the reader, we first recall a definition introduced in \cite[Definition 4.4]{CanKau}, inspired in part by \cite[Definition 1.1]{PousoRodriguez}.

\begin{defin}\label{Def:Diff}
Let $K$ be a compact line, and let $f,G:K\to\mathbb{R}$ be functions, where $G$ is nondecreasing and right-continuous. Fix $x\in K$.

If $L_G(x)=G(x)$, we say that $f$ is \emph{$G$-differentiable at $x$} if the following conditions hold:
\begin{enumerate}[label=\textnormal{(D\alph*)}]
    \item If $x$ is left-isolated in $K$, then $f(x)=L_f(x)$;
    
    \item\label{it:diff_defin1} For every neighborhood $V$ of $x$, the function $G$ is not constant on $V$;

    \item\label{it:diff_defin2} There exists $D\in\mathbb{R}$ such that, for every $\varepsilon>0$, there exists a neighborhood $V$ of $x$ in $K$ satisfying
    \[
    \left|f(y)-f(x)-D\bigl(G(y)-G(x)\bigr)\right|
    \leq
    \varepsilon |G(y)-G(x)|
    \]
    for all $y\in V$.
\end{enumerate}

If $L_G(x)\neq G(x)$, we say that $f$ is \emph{$G$-differentiable at $x$} if:
\begin{enumerate}[label=\textnormal{(E\alph*)}]
    \item $f$ is regulated at $x$;

    \item There exists $D\in\mathbb{R}$ such that, for every $\varepsilon>0$, there exists a neighborhood $V$ of $x$ in $K$ satisfying
    \[
    \left|f(y)-L_f(x)-D\bigl(G(y)-L_G(x)\bigr)\right|
    \leq
    \varepsilon |G(y)-L_G(x)|
    \]
    for all $y\in V\cap[x,1_K]$.
\end{enumerate}

In both cases, the number $D$ is called the \emph{$G$-derivative} of $f$ at $x$, and we write
\[
\frac{df}{dG}(x)=D.
\]
\end{defin}

The gap in the proof of \cite[Theorem 4.9]{CanKau} arises because, in \cite[Definition 3.25]{CanKau}, the sets in an admissible covering are not required to have positive measure, a condition needed in \cite[Theorem 3.26]{CanKau}. This requirement is essential: if intervals of measure zero were allowed, the conclusion of \cite[Theorem 3.26]{CanKau} would fail in general, since admissible coverings could consist entirely of null sets. Therefore, we present below a corrected definition of admissible covering, excluding intervals of zero measure.

\begin{defin}
Let $K$ be a compact line, let $\mu$ be a positive Radon measure on $K$, and let $A\subset K$. An \emph{admissible covering} of $A$ is a family $\mathcal{F}$ of intervals of $K$ such that $\mu(J)>0$ for every $J\in\mathcal{F}$ and, whenever $a\in A$ and $\{J_1,\dots,J_k\}$ is a finite family of pairwise disjoint elements of $\mathcal{F}$ with $a\in A\setminus\bigcup_{i=1}^k J_i$,
there exists $I\in\mathcal{F}$ such that $a\in I$ and $I\cap\Bigl(\bigcup_{i=1}^k J_i\Bigr)=\emptyset$.
\end{defin}

\begin{theorem}[Fundamental Theorem of Calculus: differentiating integrals]\label{Thm:differentiation}
Let $K$ be a compact line, and let $G: K \to \mathbb{R}$ be a nondecreasing, positive, and right-continuous. Suppose that $f:K\to\mathbb{R}$ is $G$-integrable, and consider the function $F:K\to \mathbb{R}$ given by the formula
\[F(x) = \int_{0_K}^x f \, dG.\]
 Then there exists a $G$-null set $\mathcal{Z} \subset K$ such that, for all $x \in K \setminus \mathcal{Z}$, the derivative $\frac{dF}{dG}(x)$ exists and satisfies  
\[\frac{dF}{dG}(x) = f(x).\]
\end{theorem}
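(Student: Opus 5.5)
We argue by contradiction: we split the exceptional set according to the two cases of Definition~\ref{Def:Diff} and show that the bad points form a $G$-null set. The tools are the Saks--Henstock lemma for the $G$-integral from \cite{CanKau} and the Vitali-type covering theorem \cite[Theorem 3.26]{CanKau}, now applied with the corrected notion of admissible covering.

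First we dispose of the points where $\mu_G$ has an atom, i.e.\ $L_G(x)\neq G(x)$. There are at most countably many such points, since $G$ is of bounded variation. At each of them we verify (Ea) and (Eb) directly with $D=f(x)$. For $y\ge x$ close to $x$ we have
\[
F(y)-L_F(x)=f(x)\bigl(G(x)-L_G(x)\bigr)+\int_x^y f\,dG ,
\]
using the jump formula for indefinite integrals from \cite{CanKau} and the regulatedness of $F$. Since $\int_x^y f\,dG\to0$ as $y\searrow x$ (by right-continuity of $G$ and Saks--Henstock), while $|G(y)-L_G(x)|\ge G(x)-L_G(x)>0$, condition (Eb) follows. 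The same estimate covers the case where $x$ is right-isolated.

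Next we discard the points with $L_G(x)=G(x)$ that fail (Db), or that are left-isolated. If $G$ is constant on a neighbourhood of $x$, then $x$ lies in an open interval of $\mu_G$-measure zero. The union of all such maximal open intervals is $\mu_G$-null because $\mu_G$ is Radon (inner regularity plus compactness). A left-isolated $x$ with $L_G(x)=G(x)$ satisfies $\mu_G(\{x\})=0$; these points either fall into the previous null set, or (Da) follows from continuity of $F$ at such $x$, which Saks--Henstock gives since $G(x)=G(x^-)$.

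The main case is the set $E$ of points $x$ that are atomless, satisfy (Db), and at which (Dc) fails with $D=f(x)$. Write $E=\bigcup_{n}E_n$, where $E_n$ consists of points $x$ such that every neighbourhood $V$ contains some $y$ with
\[
|F(y)-F(x)-f(x)(G(y)-G(x))|>\tfrac1n|G(y)-G(x)|.
\]
By (Db) and the fact that $G$ is nondecreasing, we may take such $y$ with $G(y)\ne G(x)$, so the interval $J$ between $x$ and $y$ has $\mu_G(J)>0$. Fix $\varepsilon>0$ and a gauge $\delta$ from Saks--Henstock with error $\varepsilon/n$. Let $\mathcal F$ be the family of closed intervals $J=[x,y]$ or $[y,x]$ with $x\in E_n$ and $J\subset\delta(x)$; this is an admissible covering of $E_n$. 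Here is the main obstacle. Admissibility requires that, given finitely many disjoint members of $\mathcal F$ avoiding $x$, we can find a witness $y$ close enough that the new interval avoids them. This uses that the finite union is closed and does not contain $x$, so some neighbourhood of $x$ misses it, and the definition of $E_n$ supplies a $y$ in that neighbourhood. A separate issue is that the correct measure in admissibility is $\mu_G$ of the closed interval; since $x$ carries no atom, this equals $|G(y)-G(x)|$ up to the endpoint convention, and we handle it by choosing half-open intervals consistent with $\mu_G([a,b])=G(b)-L_G(a)$.

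Theorem~3.26 of \cite{CanKau} then yields finitely many disjoint $J_1,\dots,J_k\in\mathcal F$ with $\mu_G^*(E_n)\le\sum_i\mu_G(J_i)+\varepsilon$. Saks--Henstock applied to the $\delta$-fine partial partition $\{(J_i,x_i)\}$ gives
\[
\tfrac1n\sum_i\mu_G(J_i)<\sum_i|F(J_i)-f(x_i)\mu_G(J_i)|\le\varepsilon/n .
\]
Hence $\mu_G^*(E_n)\le 2\varepsilon$, so $E_n$ is $G$-null. Taking the union over $n$ together with the earlier null sets produces the required set $\mathcal Z$.
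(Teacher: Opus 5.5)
Your overall strategy is the paper's. Atoms are settled through the jump formula for $F$ from \cite{CanKau}. Points where $G$ is locally constant are covered by an open $\mu_G$-null set via inner regularity. The remaining bad set is split into the sets $E_n$, which are shown to be null by the Vitali-type theorem (with positive-measure admissible coverings) together with the Saks--Henstock lemma. Taking witnesses on either side of $x$, instead of the paper's split into points with right witnesses and points with only left witnesses, is harmless. Your explicit check of (Da) makes precise a point the paper leaves implicit.

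The one step that does not go through as written is admissibility after your switch to half-open intervals. You rightly note a problem with closed intervals. There $\mu_G([y_i,z_i])=G(z_i)-L_G(y_i)$ may exceed $G(z_i)-G(y_i)$ when a left witness $y_i$ is an atom. The Saks--Henstock bound controls only $\sum_i|F(z_i)-F(y_i)-f(x_i)(G(z_i)-G(y_i))|$, so it would not bound $\sum_i\mu_G(J_i)$. You therefore propose $(y_i,x_i]$ at such atoms. But your admissibility argument rests on the finite union of members of $\mathcal F$ being closed, and that fails for these intervals: if a right-dense point $a$ equals some such $y_i$, every neighbourhood of $a$ meets $(y_i,x_i]$.

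The missing observation is exactly how the paper closes this step. An endpoint is opened only when $L_G(y_i)<G(y_i)$, whereas every point of $E_n$ satisfies $L_G=G$. So no point of $E_n$ lies in the closure of the union without lying in it, and the required neighbourhood exists. Alternatively, keep closed intervals throughout and use $\mu_G^*(E_n\cap[y_i,z_i])\le G(z_i)-G(y_i)$, again because $E_n$ contains no atoms.

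Finally, the fact that every witness $y$ satisfies $G(y)\neq G(x)$ does not come from (Db). It holds because $G(y)=G(x)$ makes $G$ constant between $x$ and $y$, hence $F(y)=F(x)$, and the strict inequality defining $E_n$ then fails.
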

\begin{proof}
From \cite[Theorem 4.1]{CanKau}, $F$ as defined above is an amenable function. It follows from \cite[Proposition 4.5]{CanKau}, if $x \in K$ is such that $G(x) \neq L_G(x)$, the $F$ is $G$-differentiable at $x$, and, recalling again  \cite[Theorem 4.1]{CanKau}, we have
\[
\frac{dF}{dG}(x) = \frac{F(x) - L_F(x)}{G(x) - L_G(x)} = f(x).
\]

We deduce that the set $\mathcal{Z}$ of all points $x \in K$ where either $\frac{dF}{dG}(x)$ does not exist, or it exists but differs from $f(x)$, must satisfy $G(x) = L_G(x)$. Set $U = \{x \in K : G(x) = L_G(x)\}$.

From \ref{it:diff_defin1} and \ref{it:diff_defin2}, we can partition $\mathcal{Z}$ into two subsets $\mathcal{A}$ and $\mathcal{B}$ where: 
\begin{itemize}
    \item $\mathcal{A}$ consists of all points $x \in U$ for which there exists a neighborhood $V$ of $x$ such that $G$ is constant on $V$, and
    \item $\mathcal{B}$ is composed of all points $x \in U \setminus \mathcal{A}$ for which there exists $\alpha(x) > 0$ such that, for every neighborhood $V$ of $x$, there exists $c_{x,V} \in V \setminus \{x\}$ satisfying
\begin{equation}\label{RelAuxNotDif}
\left|F(x) - F(c_{x,V}) - f(x)\big(G(x) - G(c_{x,V})\big)\right| 
> \alpha(x) \left|G(x) - G(c_{x,V})\right|.
\end{equation}
\end{itemize}

To prove the theorem, it suffices to show that both sets $\mathcal{A}$ and $\mathcal{B}$ are $G$-null.

\medskip
\textbf{Claim 1:} $\mathcal{A}$ is $G$-null.

\medskip
For each $x \in \mathcal{A}$, choose an open interval $V_x$ containing $x$ such that $G$ is constant on $V_x$. Since $x \in U$, we have $G(x)=L_G(x)$. We claim that there exists an open interval $I_x$, containing $x$ and satisfying $I_x \subseteq V_x$, such that $\mu_G(I_x)=0$. Indeed, we distinguish four cases.

If $x$ is isolated in $K$, we may take $I_x=\{x\}$, which is open in $K$. Then
\[
\mu_G(I_x)=\mu_G(\{x\})=G(x)-L_G(x)=0.
\]

Assume next that $x$ is either $0_K$ or left-isolated, but right-dense. Since $V_x$ is open and contains $x$, there exists $b>x$ such that $[x,b]\subseteq V_x$. Set $I_x=[x,b)$. Since $G$ is constant on $V_x$ and moreover $L_G(x)=G(x)$, we have
\[
\mu_G(I_x)=L_G(b)-L_G(x)=G(b)-G(x)=0.
\]

Assume now that $x$ is either $1_K$ or right-isolated, but left-dense. Then there exists $a<x$ such that $[a,x]\subseteq V_x$. Set $I_x=(a,x]$. Since $G$ is constant on $V_x$, we have $G(a)=G(x)$ and therefore
\[
\mu_G(I_x)=G(x)-G(a)=0.
\]

Finally, assume that $x$ is both left-dense and right-dense. Then there exist $a<x<b$ such that $[a,b]\subseteq V_x$. Set $I_x=(a,b)$. Since $G$ is constant on $V_x$ we have
\[
\mu_G(I_x)=L_G(b)-G(a)=G(b)-G(a)=0.
\]

Our claim is established.

Set $\Omega=\bigcup_{x\in \mathcal{A}}I_x$ and let $W\subseteq \Omega$ be an arbitrary compact subset. Then $\{I_x:x\in \mathcal{A}\}$ is an open cover of $W$, so by compactness there exist $x_1,\dots,x_n\in \mathcal{A}$ such that $W\subseteq I_{x_1}\cup\cdots\cup I_{x_n}$. Hence
\[
\mu_G(W)\le \sum_{i=1}^n \mu_G(I_{x_i})=0.
\]
Therefore every compact subset of $\Omega$ has $\mu_G$-measure zero. Since $\mu_G$ is a Radon measure and $\Omega$ is open, it follows that $\mu_G(\Omega)=0$. Since $\mathcal{A}\subset \Omega$, we obtain that $\mu_G(\mathcal{A})=0$.

\medskip
\textbf{Claim 2:} $\mathcal{B}$ is $G$-null.
\medskip

Let $C$ be the subset consisting of all points $x \in \mathcal{B}$ such that, for every open interval $V$ containing $x$, there exists $c_{x,V} > x$ satisfying relation~\eqref{RelAuxNotDif}. In this case, we fix $y_{x,V} = x$ and $z_{x,V} = c_{x,V}$, and define the interval $J_{x,V} = [y_{x,V}, z_{x,V}]$.

 If $x \in \mathcal B \setminus C$, then there exists an open interval $V_0$ containing $x$ such that no point $c > x$ in $V_0$ satisfies \eqref{RelAuxNotDif}. Since $x \in \mathcal B$, for every open interval $V \subseteq V_0$ containing $x$, there must exist a point $c_{x,V} < x$ satisfying \eqref{RelAuxNotDif}. In this case, we set $y_{x,V} = c_{x,V}$ and $z_{x,V} = x$. If $G(y_{x,V}) = L_G(y_{x,V})$, define $J_{x,V} = [y_{x,V}, z_{x,V}]$;
otherwise, define $J_{x,V} = (y_{x,V}, z_{x,V}]$. 

For all $x$ and all open intervals $V$ containing $x$, we have
\begin{equation}\label{Rel:MeasureFinal}
\mu_G(J_{x,V}) = G(z_{x,V}) - G(y_{x,V}) > 0.
\end{equation}

Next, for each $n\in\mathbb N$, let $B_n=\{x\in B:\alpha(x)\geq 1/n\}$. Since $B=\bigcup_{n=1}^\infty B_n$, to conclude the proof it suffices to show that each $B_n$ is $G$-null.
    
Fix $n \in \mathbb{N}$, and let $\varepsilon > 0$ be arbitrary. Since $f$ is $G$-integrable, we may fix a $\frac{\varepsilon}{n}$-gauge $\delta$ for $f$ and consider the collection  
\[
\mathcal{F} = \{J_{x,V} : x \in B_n \text{ and } V\subseteq \delta(x)\text{ is an open interval containing } x\}.
\]
We claim that this family forms an admissible covering of $B_n$. Indeed, let $J_{x_1,V_1} < J_{x_2,V_2} < \dots < J_{x_m,V_m}$
be pairwise disjoint elements of $\mathcal{F}$, and let $a \in B_n \setminus \bigcup_{i=1}^m J_{x_i,V_i}$. We claim that there exists an open interval $V$ containing $a$ such that $V \cap \left(\bigcup_{i=1}^m J_{x_i,V_i}\right)=\emptyset$. Otherwise, every open interval containing $a$ would intersect $\bigcup_{i=1}^m J_{x_i,V_i}$. Since the intervals $J_{x_i,V_i}$ are pairwise disjoint and linearly ordered, and since $a\notin \bigcup_{i=1}^m J_{x_i,V_i}$,
it would follow that there exists $1\le i\le m$ such that
\[J_{x_i,V_i}=(y_{x_i,V_i},z_{x_i,V_i}]
\qquad\text{and}\qquad
a=y_{x_i,V_i}.
\]
By the definition of $J_{x_i,V_i}$, this implies $L_G(a)<G(a)$, which contradicts the fact that $a\in B_n\subseteq \mathcal B\subseteq U$. Thus there exists an open interval $V$ containing $a$ such that $V \cap \left(\bigcup_{i=1}^m J_{x_i,V_i}\right)=\emptyset$. Setting $V'=V\cap \delta(a)$, we have $J_{a,V'}\in\mathcal F$, and clearly
\[
J_{a,V'}\cap \left(\bigcup_{i=1}^m J_{x_i,V_i}\right)=\emptyset.
\]
This proves that $\mathcal F$ is an admissible covering of $B_n$.

Next, according to \cite[Theorem 3.26]{CanKau}, there exists a finite subcollection $\mathcal{H} = \{J_{x_1,V_1}, \dots, J_{x_m,V_m}\} \subset \mathcal{F}$ consisiting of pairwise disjoint intervals such that $\mu_G^*(B_n \setminus \bigcup \mathcal{H}) < \varepsilon$,
where $\mu_G^*$ denotes the exterior measure induced by $\mu_G$.
On one hand, by employing \cite[Theorem 3.16]{CanKau}, we obtain
\begin{align*}
    &\sum_{i=1}^m\left|f(x_i)(G(z_{x_i,V_i}) - G(y_{x_i,V_i})) + f(y_{x_i,V_i})G(y_{x_i,V_i}) - \int_{y_{x_i,V_i}}^{z_{x_i,V_i}} f\,dG\right|\\
    &= \sum_{i=1}^m\left|f(x_i)(G(z_{x_i,V_i}) - G(y_{x_i,V_i})) - (F(z_{x_i,V_i}) - F(y_{x_i,V_i}))\right| \\
    &> \sum_{i=1}^m \alpha(x_i)\big(G(z_{x_i,V_i}) - G(y_{x_i,V_i})\big) \\
    &\geq \frac{1}{n} \sum_{i=1}^m \big(G(z_{x_i,V_i}) - G(y_{x_i,V_i})\big).
\end{align*}

On the other hand, observe that $\mathcal{S} = \{([y_{x_1,V_1}, z_{x_1,V_1}], x_1), \dots, ([y_{x_m,V_m}, z_{x_m,V_m}], x_m)\}$ is a $\delta$-fine tagged system in $K$. It then follows from  Saks--Henstock Lemma \cite[Corollary 3.19]{CanKau} that
\[\sum_{i=1}^m \left|f(x_i)(G(z_{x_i,V_i}) - G(y_{x_i,V_i})) + f(y_{x_i,V_i})G(y_{x_i,V_i}) - \int_{y_{x_i,V_i}}^{z_{x_i,V_i}} f\,dG\right| \leq \frac{4\varepsilon}{n}.\]

Combining these inequalities and recalling relation~\eqref{Rel:MeasureFinal}, we get
\[\mu_G\left(\bigcup \mathcal{H}\right) = \sum_{i=1}^m \big(G(z_{x_i,V_i}) - G(y_{x_i,V_i})\big) < 4\varepsilon.\]
Therefore,
\[\mu_G^*(B_n) \leq \mu_G^*\left(B_n \setminus \bigcup \mathcal{H}\right) + \mu_G^*\left(\bigcup \mathcal{H}\right) < 5\varepsilon,\]
which concludes the proof.
\end{proof}

\section{Measurability of $G$-integrable functions}
\label{Sec:Measurable}

In this section, our goal is to establish Theorem~\ref{Thm:Main1}. A natural approach would be to adapt the classical argument for the usual Henstock--Kurzweil integral by constructing a sequence of step functions converging pointwise to a $G$-integrable function $f$. However, this method fails in the present setting, since a nondecreasing right-continuous function $G\colon K\to\mathbb{R}$ may have many plateaus, that is, intervals on which $G$ is constant. On such regions, the $G$-derivative may assume different values at the endpoints. For this reason, a more flexible strategy is required, inspired by \cite[Corollary 4.8.5]{KurtzSwartz}; see also \cite[Theorem 6.3.3]{Pfeffer}. This strategy will be developed through a sequence of auxiliary lemmas.
We begin with the following definition.

\begin{defin}\label{Def:Step}
Let $K$ be a compact line. A function $S:K\to\mathbb{R}$ is called a \emph{step function} if there exist a division of $K$ $0_K=s_0<s_1<\cdots<s_n=1_K$ such that $S$ is constant on each interval $(s_{k-1},s_k)$ for $k=1,\dots,n$.
\end{defin}

The following lemma extends a classical approximation result to compact lines. For the standard version on real intervals, see \cite[Theorem 3.17]{Bartle}.

\begin{lemma}\label{Lem:RegulatedIsMeasurable}
Let $K$ be a compact line and let $G:K\to\mathbb{R}$ be a regulated function. Then $G$ is the uniform limit of step functions. In particular, $G$ is Borel measurable.
\end{lemma}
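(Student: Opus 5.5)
The plan is the classical compactness argument used for regulated functions on real intervals (\cite[Theorem 3.17]{Bartle}), adapted to the order structure of $K$. Fix $\varepsilon>0$. For each $x\in K$ we choose an open interval $\delta(x)$ containing $x$ on which $G$ oscillates by at most $\varepsilon$ on each one-sided piece. If $x$ is left-dense, the left limit $L_G(x)$ exists, so there is $a_x<x$ with $|G(y)-L_G(x)|\le\varepsilon$ for all $y\in(a_x,x)$. If $x$ is left-isolated, the set $(x^-,x)$ is empty and no condition is needed. If $x$ is right-dense, regularity gives $\lim_{y\searrow x}G(y)=G(x)$, so there is $b_x>x$ with $|G(y)-G(x)|\le\varepsilon$ on $(x,b_x)$. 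If $x$ is right-isolated, the set $(x,x^+)$ is empty. The endpoints $0_K$ and $1_K$ are treated in the same way using half-open neighborhoods. We let $\delta(x)$ be the resulting open interval around $x$.

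Next we use compactness of $K$ to find a division adapted to $\delta$. The cleanest route is the Cousin-type lemma for compact lines: every gauge admits a $\delta$-fine tagged partition. This is standard and, I expect, available in \cite{CanKau}. If it is not, I would prove it directly. Let $s=\sup\{y\in K: [0_K,y]$ admits a $\delta$-fine partition$\}$; this supremum exists by completeness of $K$. Then $s$ is attained, and $s=1_K$, by examining $\delta(s)$ and distinguishing whether $s$ is left- or right-isolated or dense. This step is the main obstacle, and it is where the isolated/dense case analysis must be handled carefully.

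Given a $\delta$-fine partition $\{([x_{i-1},x_i],t_i)\}$, we refine the division by inserting the tags $t_i$. This gives points $0_K=s_0<\dots<s_m=1_K$ after discarding repetitions, and each open interval $(s_{k-1},s_k)$ lies on one side of a tag $t$, within $\delta(t)$. We can arrange that it lies entirely in $(t,b_t)$ or in $(a_t,t)$. If it is nonempty, then $t$ is right-dense (respectively left-dense), since otherwise the interval would be empty. Define $S$ to equal $G$ at each $s_k$. On $(s_{k-1},s_k)$, let $S$ be constant, equal to $G(t)$ if the interval lies to the right of its tag and to $L_G(t)$ if it lies to the left. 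By the choice of $\delta$, $|G-S|\le\varepsilon$ everywhere, so $S$ is a step function in the sense of Definition~\ref{Def:Step}, and $G$ is a uniform limit of such functions as $\varepsilon=1/n\to0$.

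For measurability, each step function is Borel: it is a finite combination of indicators of the points $s_k$, which are closed, and of the open intervals $(s_{k-1},s_k)$. A uniform (hence pointwise) limit of Borel functions is Borel, so $G$ is Borel measurable.
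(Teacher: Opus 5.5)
Your argument is correct in substance, but it routes the compactness step through a Cousin-type lemma, whereas the paper uses compactness directly. The paper picks closed intervals $J_w=[a_w,b_w]$ with $w\in\operatorname{int}_K(J_w)$ and oscillation below $\varepsilon/2$ on each one-sided piece $J_w\cap[0_K,w)$ and $J_w\cap(w,1_K]$. It extracts a finite subcover by the interiors and takes all the points $a_{w_j},w_j,b_{w_j}$ as the division. Every gap $(s_{k-1},s_k)$ then lies on one side of some $w_j$ inside $J_{w_j}$, and $S$ is set to $G(x_k)$ for an arbitrary $x_k$ in the gap. That route needs no tags, no Cousin lemma and no one-sided limit values. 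Yours gives a canonical approximant ($G(t)$ to the right of a tag, $L_G(t)$ to the left) and reuses the gauge machinery. The cost is that it depends on the exact fineness convention, and this is exactly the step you flagged as the obstacle.

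Your gauge must not reach across a jump. For right-isolated $x$ you need $\delta(x)\cap(x,1_K]=\emptyset$; otherwise a nonempty gap $(t,x_i)$ to the right of a right-isolated tag $t$ would carry no oscillation control. With the fineness condition as literally written in Section 2, $[x_{i-1},x_i]\subset\delta(t_i)$, such a gauge can admit no fine partition at all. On $K=\{0,1\}$ your construction gives $\delta(0)=\{0\}$ and $\delta(1)=\{1\}$, while every division has some $[x_{i-1},x_i]=K$. So the Cousin lemma you invoke must be the half-open version, $(x_{i-1},x_i]\subset\delta(t_i)$, which is the form used in the proof of Theorem~\ref{Thm:Hake}. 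In fact your argument only uses $(x_{i-1},x_i)\subset\delta(t_i)$. Your supremum argument proves this version, provided that at a right-isolated $s<1_K$ you append $([s,s^+],s^+)$, tagging the jump at $s^+$, since $\delta(s)$ itself does not contain $s^+$. With that convention fixed, the rest (inserting tags, the one-sided bounds, Borel measurability of the limit) goes through as you wrote it.
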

\begin{proof}
Fix $\varepsilon>0$. For each $w\in K$, since $G$ is regulated at $w$, we can choose a closed interval $J_w=[a_w,b_w]\subset K$
such that $w\in \operatorname{int}_K(J_w)$ and:
\begin{enumerate}[label=\textnormal{(\roman*)}]
    \item if $w\neq 0_K$, then
    \[
    y_1,y_2\in J_w\cap [0_K,w)
    \quad\Longrightarrow\quad
    |G(y_1)-G(y_2)|<\frac{\varepsilon}{2};
    \]
    \item if $w\neq 1_K$, then
    \[
    y_1,y_2\in J_w\cap (w,1_K]
    \quad\Longrightarrow\quad
    |G(y_1)-G(y_2)|<\frac{\varepsilon}{2}.
    \]
\end{enumerate}
The collection $\{\operatorname{int}_K(J_w):w\in K\}$ is an open cover of $K$. By compactness, there exist $w_1,\dots,w_m\in K$
such that $K=\bigcup_{j=1}^m \operatorname{int}_K(J_{w_j})$. 

We let $F=\bigcup_{j=1}^m\{a_{w_j},w_j,b_{w_j}\}$. By enumerating the distinct points of $F$ in increasing order, we obtain a a division of $K$:
\[
0_K=s_0<s_1<\cdots<s_n=1_K.
\]

We claim that for each $k=1,\ldots,n$, there exists some index $j=1,\ldots,m$ such that either 
\[
(s_{k-1},s_k)\subset J_{w_j} \cap [0_K,w_j)
\qquad\text{or}\qquad
(s_{k-1},s_k)\subset J_{w_j} \cap (w_j,1_K].
\]

Indeed, let $x\in (s_{k-1},s_k)$ be arbitrary. Since the interiors $\operatorname{int}_K(J_{w_j})$ cover $K$, there exists $j$ such that $x\in \operatorname{int}_K(J_{w_j})$. Because the points $a_{w_j},w_j,b_{w_j}$ all belong to $F$, none of them belongs to $(s_{k-1},s_k)$, and we deduce that either 
\[
(s_{k-1},s_k)\subset (a_{w_j},w_j),\qquad\text{or}\qquad
(s_{k-1},s_k)\subset (w_j,b_{w_j}),
\]
which establishes our claim.

For each $k=1,\ldots, n$, if $(s_{k-1},s_k)\neq \emptyset$, we fix $x_k\in (s_{k-1},s_k)$. We define the function $S_\varepsilon:K\to\mathbb{R}$ as follows: 
\[
S_\varepsilon(x)=
\begin{cases}
G(s_k), & \text{if }x=s_k\text{ for some }k=0,\ldots,n,\\
G(x_k), & \text{if }x\in (s_{k-1},s_k)\text{ for some }k=1,\ldots,n.
\end{cases}
\]

It is clear that $S_\varepsilon$ is a well-defined step function.

Let $x\in K$ be arbitrary. If $x=s_k$ for some $k=0,\ldots,n$, then 
\[
|G(x)-S_\varepsilon(x)|=|G(s_k)-G(s_k)|=0.
\]
Otherwise, $x\in (s_{k-1},s_k)$ for some $k=1,\ldots,n$. As shown above, there is some $j=1,\ldots,m$ such that either $(s_{k-1},s_k)\subset J_{w_j}\cap [0_K,w_j)$ or $(s_{k-1},s_k)\subset J_{w_j}\cap (w_j,1_K]$. Therefore, by construction of the set $J_{w_j}$ we have
\[
|G(x)-S_\varepsilon(x)|=|G(x)-G(x_k)|<\frac{\varepsilon}{2}.
\]
Therefore, $\|G-S_\varepsilon\|_\infty<\varepsilon$. Since $\varepsilon>0$ is arbitrary, we deduce that $G$ is the uniform limit of step functions. In particular, $G$ is Borel measurable.
\end{proof}

In the following definition, we introduce functions that will play a fundamental role in the flexible strategy used in the proof of Theorem~\ref{Thm:Main1}.

\begin{lemma}\label{Lem:Functions}
Let $K$ compact line, and let $G:K\to\mathbb R$ be a nondecreasing function. For each $n\in\mathbb N$, we let $u_n,v_n:K\to K$ be the functions defined by
\[
u_n(x)=
\begin{cases}
\sup\{y\in K: y<x \text{ and } G(x)-G(y)\ge 2^{-n}\}, & \text{if the defining set is nonempty},\\
0_K, & \text{otherwise},
\end{cases}
\]
and
\[
v_n(x)=
\begin{cases}
\inf\{y\in K: y>x \text{ and } G(y)-G(x)\ge 2^{-n}\}, & \text{if the defining set is nonempty},\\
1_K, & \text{otherwise}.
\end{cases}
\]
For each $n \in \mathbb{N}$, $u_n,v_n$ are nondecreasing functions. Hence Borel measurable functions.
\end{lemma}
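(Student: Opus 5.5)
We prove monotonicity of $u_n$ directly from the definitions, then obtain $v_n$ by the symmetric argument; Borel measurability follows as a general fact about monotone maps between compact lines.

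\textbf{Monotonicity of $u_n$.} Fix $n$ and $x\le x'$ in $K$, and write
\[
A_x=\{y<x: G(x)-G(y)\ge 2^{-n}\}.
\]
If $A_x=\emptyset$, then $u_n(x)=0_K\le u_n(x')$, so there is nothing to prove. Otherwise, take $y\in A_x$. Then $y<x\le x'$, and since $G$ is nondecreasing,
\[
G(x')-G(y)\ge G(x)-G(y)\ge 2^{-n}.
\]
Hence $y\in A_{x'}$, so $A_x\subseteq A_{x'}$, and in particular $A_{x'}\neq\emptyset$. Therefore $u_n(x)=\sup A_x\le \sup A_{x'}=u_n(x')$. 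The suprema exist because $K$ is a compact line, hence order complete.

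\textbf{Monotonicity of $v_n$.} This is the mirror argument. Write
\[
B_x=\{y>x: G(y)-G(x)\ge 2^{-n}\}.
\]
If $x\le x'$ and $y\in B_{x'}$, then $y>x'\ge x$ and $G(y)-G(x)\ge G(y)-G(x')\ge 2^{-n}$, so $y\in B_x$. Thus $B_{x'}\subseteq B_x$. If $B_{x'}=\emptyset$, then $v_n(x')=1_K\ge v_n(x)$. Otherwise both sets are nonempty and $\inf B_x\le \inf B_{x'}$, i.e. $v_n(x)\le v_n(x')$.

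\textbf{Borel measurability.} Let $\phi:K\to K$ be nondecreasing. The order topology on $K$ is generated by the rays $[0_K,c)$ and $(c,1_K]$, so every open set of $K$ is a union of open intervals. A union of rays need not be countable, so we cannot conclude directly from preimages of a subbasis. We argue instead as follows.

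First, the preimage of a ray is convex: if $a<b$ lie in $\phi^{-1}((c,1_K])$ and $a<t<b$, then $\phi(t)\ge\phi(a)>c$. The same holds for $[0_K,c)$, and hence the preimage of any convex set is convex.

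Second, every convex subset of a compact line is Borel, since it is an interval (possibly with non-attained endpoints). Indeed, let $C$ be convex, $s=\inf C$ and $t=\sup C$, which exist by completeness. Then $C$ equals one of $[s,t]$, $[s,t)$, $(s,t]$, $(s,t)$, and each of these is an intersection of at most two open or closed rays.

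Third, let $O\subseteq K$ be open. Its convex components $C$ are open intervals. The preimage $\phi^{-1}(O)$ is the disjoint union of the convex sets $\phi^{-1}(C)$, which may be uncountably many.

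\textbf{Main obstacle.} This last step is where the difficulty lies. It is not immediately clear that an uncountable union of convex sets is Borel in a nonseparable line.

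The plan is to use the structure of order-convex sets: a union of pairwise disjoint convex sets is determined by its convex components, so $\phi^{-1}(O)$ differs from its interior only by endpoints of those components. I would first try to show this union is Borel directly. If that fails, I would fall back on the measurability actually needed later, namely $\mu_G$-measurability. A positive Radon measure charges at most countably many components with positive mass, and the remaining components together form a null set after approximating by compact sets.

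If the paper intends only the weaker statement in this sense, I would prove preimages are $\mu_G$-measurable via inner regularity. Otherwise I would cite the standard fact that monotone maps between linearly ordered spaces are Borel for the order topology.
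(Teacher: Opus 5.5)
Your monotonicity argument is correct and is the same as the paper's: inclusion of the defining sets, then pass to $\sup$/$\inf$. The gap is the Borel measurability. You flag it but never prove it, and neither exit you propose works.

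Falling back to $\mu_G$-measurability proves a weaker statement. The next lemma composes $u_n,v_n$ with arbitrary Borel $F$, which needs genuine Borel measurability. Your sketch also does not explain why the union of uncountably many ``negligible'' components is measurable.

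More seriously, the ``standard fact'' you would cite is false for self-maps of compact lines. On $K=\omega_1+1$, let $S$ be a stationary, co-stationary set of limit ordinals, and set $\phi(\alpha)=\alpha+1$ for $\alpha\in S$ and $\phi(\alpha)=\alpha$ otherwise. Then $\phi$ is nondecreasing: if $\alpha<\beta$ then $\phi(\alpha)\le\alpha+1\le\beta\le\phi(\beta)$. The set $O$ of successor ordinals below $\omega_1$ is open, and $\phi^{-1}(O)\setminus O=S$. This set is not Borel, since every Borel subset of $\omega_1$ either contains or is disjoint from a closed unbounded set. So a direct attack on ``uncountable unions of convex sets'' cannot succeed without using the specific form of $u_n,v_n$. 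The paper also closes this step only with the one-line remark that monotone functions on compact lines are measurable. By this example, that remark is only safe for real-valued monotone functions, where second countability of $\mathbb R$ turns preimages of open sets into countable unions of convex sets.

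The missing idea is to factor through $G$, so that the uncountable union happens in $\mathbb R$. Since $G(y)\le G(x)-2^{-n}$ forces $y<x$, you have $u_n(x)=\tau(G(x)-2^{-n})$ with $\tau(s)=\sup\{y\in K:G(y)\le s\}$, and $\tau(s)=0_K$ if this set is empty. Symmetrically, $v_n(x)=\rho(G(x)+2^{-n})$ with $\rho(s)=\inf\{y\in K:G(y)\ge s\}$, and $\rho(s)=1_K$ if that set is empty. The real-valued nondecreasing $G$ is Borel.

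For $\tau$, let $C$ be a convex component of an open $O\subseteq K$ and suppose $\tau(s)=p\in C$.
\begin{itemize}
\item If $p=1_K$, then $\tau\equiv 1_K$ on $[s,\infty)$.
\item If $p$ is right-isolated, then $G(p^+)>s$ and $\tau\equiv p$ on $[s,G(p^+))$.
\item If $p<1_K$ is right-dense, choose $w>p$ with $[p,w]\subseteq C$. Then $G(w)>s$ and $\tau([s,G(w)))\subseteq[p,w]$, because the down-closed set $\{G\le s'\}$ omits $w$.
\end{itemize}
Thus every nonempty $\tau^{-1}(C)$ is a nondegenerate interval of $\mathbb R$. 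These intervals are pairwise disjoint, hence countably many, so $\tau^{-1}(O)$ is Borel. Therefore $\tau$ is Borel measurable, and so is $u_n=\tau\circ(G-2^{-n})$. The mirrored argument handles $\rho$ and $v_n$, using left-isolated and left-dense points and $0_K$ in place of their right-hand counterparts.
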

\begin{proof}
We first check that each $u_n:K\to K$ is nondecreasing. Indeed, let $x_1<x_2$ be arbitrary. If $\{y\in K: y<x \text{ and } G(x_1)-G(y)\ge 2^{-n}\}=\emptyset$, then $u_n(x_1)=0_K\leq u_n(x_2)$ and we are done. Otherwise, there is $y<x_1$ such that
\[
G(x_1)-G(y)\ge 2^{-n}.
\]
Since $G$ is nondecreasing we have
\[
G(x_2)-G(y)\ge G(x_1)-G(y)\ge 2^{-n},
\]
and we deduce that 
\[ \{y\in K: y<x \text{ and } G(x_1)-G(y)\ge 2^{-n}\}\subset  \{y\in K: y<x \text{ and } G(x_2)-G(y)\ge 2^{-n}\}.\] By taking the supremum we obtain $u_n(x_1)\le u_n(x_2)$.

Similarly, we check that each $v_n:K\to K$ is nondecreasing. If $x_1<x_2$ and  $\{y\in K: y<x \text{ and } G(y)-G(x_2)\ge 2^{-n}\}=\emptyset$, we have $ v_n(x_1)\leq 1_K=v_n(x_2)$ and we are done. Otherwise, there is $y>x_2$ such that
\[
G(y)-G(x_2)\ge 2^{-n},
\]
and since $G$ is nondeacreasing,
\[
G(y)-G(x_1)\ge G(y)-G(x_2)\ge 2^{-n}.
\]
Therefore, 
\[ \{y\in K: y<x \text{ and } G(y)-G(x_2)\ge 2^{-n}\}\subset  \{y\in K: y<x \text{ and } G(y)-G(x_1)\ge 2^{-n}\}\]
and we deduce by taking the infimum that $v_n(x_1)\le v_n(x_2)$.

We are done since monotone functions on compact lines are measurable.
\end{proof}

\begin{lemma}\label{Lem:Intersection}
Let $K$ be a compact line, let $G:K\to\mathbb R$ be nondecreasing, and fix $x\in K$. Assume that $G^{-1}(\{G(x)\})=[a,b]$ with $a\leq b$. If $a$ is left-dense, then $u_n(x)\nearrow a$, and if $b$ is right-dense, then $v_n(x)\searrow b$. Additionally, if $G$ is continuous at $a$, then $u_n(x)<a$ for all $n\in\mathbb N$, and if $G$ is continuous at $b$, then $v_n(x)>b$ for all $n\in\mathbb N$.
\end{lemma}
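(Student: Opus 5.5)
I will treat the $u_n$ statements in detail; the $v_n$ statements follow by the symmetric argument with the order reversed. Write $S_n=\{y<x: G(x)-G(y)\ge 2^{-n}\}$. The first step is the basic observation that every element of $S_n$ lies strictly below $a$. Indeed, if $y\ge a$ and $y<x$, then $y\in[a,x]\subset[a,b]$, so $G(y)=G(x)$ and $G(x)-G(y)=0<2^{-n}$. Therefore $u_n(x)=\sup S_n\le a$ when $S_n\neq\emptyset$, and $u_n(x)=0_K\le a$ otherwise. Moreover $S_n\subset S_{n+1}$, because $2^{-(n+1)}<2^{-n}$. So the sequence $(u_n(x))_n$ is nondecreasing and bounded above by $a$.

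Next, assume $a$ is left-dense. I want to show $\sup_n u_n(x)=a$. Fix any $c<a$. Since $c<a$ and $G^{-1}(\{G(x)\})=[a,b]$, we have $G(c)\ne G(x)$, and monotonicity gives $G(c)<G(x)$. Choose $n$ with $2^{-n}\le G(x)-G(c)$. Then $c\in S_n$, so $u_n(x)\ge c$. Hence $u_n(x)\ge c$ for all large $n$. Because $a$ is left-dense, every neighborhood of $a$ contains some $c<a$ with $(c,a]$ inside that neighborhood. Combined with the monotone bound $u_n(x)\le a$, this gives $u_n(x)\to a$ in the order topology, i.e. $u_n(x)\nearrow a$.

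For the continuity statement, assume $G$ is continuous at $a$ (so $a$ is left-dense or $a=0_K$; if $a=0_K$ there is nothing below $a$ and this case needs a separate remark). Fix $n$. Continuity at $a$ gives some $c<a$ such that $G(y)>G(a)-2^{-n}=G(x)-2^{-n}$ for all $y\in(c,a]$. Hence every $y\in S_n$ satisfies $y\le c$, so $u_n(x)\le c<a$. This also covers the case $S_n=\emptyset$, since then $u_n(x)=0_K<a$ as long as $a>0_K$.

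The main obstacle is this degenerate case $a=0_K$, where $u_n(x)<a$ is impossible and the claim must be read as vacuous or excluded (likewise $b=1_K$ for $v_n$). A smaller technical point is being careful that suprema exist in $K$ by compactness/completeness, and that convergence in the order topology follows from monotonicity together with the cofinal bound above.
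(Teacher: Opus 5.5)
Your proof is correct and follows essentially the same route as the paper: you bound the nondecreasing sequence by $a$ (a bound you justify and the paper uses tacitly), push it eventually above every $c<a$ by left-density, and use left-continuity at $a$ to keep the defining set away from $a$. The one difference is that you get $u_n(x)\le c<a$ directly, where the paper argues by contradiction with a net $y_\alpha\nearrow a$; your direct bound also covers left-isolated $a$ (take $c=a^-$), so your parenthetical claim that continuity at $a$ forces $a$ to be left-dense or equal to $0_K$ is inaccurate but harmless.

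Your caveat is also right: if $a=0_K$ (resp.\ $b=1_K$) then $u_n(x)=0_K=a$ (resp.\ $v_n(x)=1_K=b$), so the strict inequalities need $a>0_K$ (resp.\ $b<1_K$). The paper tacitly assumes this, and it always holds where the lemma is applied, since $0_K,1_K\in\mathcal A$ in the proof of Theorem~\ref{Thm:Main1}.
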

\begin{proof}
For each $n\in\mathbb N$, the following hold:
\begin{align*}
\{y\in K: y<x \text{ and } G(x)-G(y)\ge 2^{-n}\}
&\subset \{y\in K: y<x \text{ and } G(x)-G(y)\ge 2^{-(n+1)}\},\\
\{y\in K: y>x \text{ and } G(y)-G(x)\ge 2^{-n}\}
&\subset \{y\in K: y>x \text{ and } G(y)-G(x)\ge 2^{-(n+1)}\}.
\end{align*}
Therefore $u_n(x)\le u_{n+1}(x)$ and $v_{n+1}(x)\le v_n(x)$, so $(u_n(x))_n$ is nondecreasing and $(v_n(x))_n$ is nonincreasing.

Assume that $a$ is left-dense. Let $y\in (0_K,a)$ be arbitrary. Since $G(y)<G(a)=G(x)$, there exists $n_0\in\mathbb N$ such that $G(y)+2^{-n_0}<G(x)$. Hence, for $n\ge n_0$, we have $y\le u_n(x)\le a$. Since $(u_n(x))_n$ is nondecreasing and $a$ is left-dense, we conclude that $u_n(x)\nearrow a$.

If, in addition, $G$ is continuous at $a$, then $u_n(x)<a$ for all $n\in\mathbb N$. Indeed, if $u_n(x)=a$ for some $n$, since
$u_n(x)=\sup\{y\in K: y<x \text{ and } G(x)-G(y)\ge 2^{-n}\}$, there exists a net $(y_\alpha)_\alpha$ such that $y_\alpha<x$, $G(x)-G(y_\alpha)\ge 2^{-n}$ for all $\alpha$, and $y_\alpha\nearrow a$. By continuity,
\[
G(y_\alpha)\to G(a)=G(x),
\]
which contradicts the fact that $G(y_\alpha)\le G(x)-2^{-n}$.

Now assume that $b$ is right-dense. Let $y\in (b,1_K)$ be arbitrary. Since $G(x)=G(b)<G(y)$, there exists $n_0\in\mathbb N$ such that $G(x)+2^{-n_0}<G(y)$. Hence, for $n\ge n_0$, we have $b\le v_n(x)\le y$. Since $(v_n(x))_n$ is nonincreasing and $b$ is right-dense, we conclude that $v_n(x)\searrow b$.

If, in addition, $G$ is continuous at $b$, then $v_n(x)>b$ for all $n\in\mathbb N$. The argument is analogous to the previous case.
\end{proof}

\begin{lemma}\label{Lem:Borel_fn}
Let $K$ be a compact line, and let $F,G:K\to\mathbb R$ be Borel measurable functions. Assume that $G$ is nondecreasing. For each $x\in K$, let
\[
U(x)=\sup_{n\in\mathbb N}u_n(x),\qquad V(x)=\inf_{n\in\mathbb N}v_n(x),
\]
and consider the sets 
\[
A=\{x\in K: U(x)=x<V(x)\},\qquad
B=\{x\in K: U(x)<x=V(x)\},
\]
and
\[
C=\{x\in K: U(x)=x=V(x)\}.
\]
For each $n \in \mathbb{N}$ define $f_n=\ell_n\chi_A+r_n\chi_B+q_n\chi_C$, where
\[
\ell_n(x)=
\begin{cases}
\displaystyle \frac{F(x)-F(u_n(x))}{G(x)-G(u_n(x))}, & \text{if } G(x)>G(u_n(x)),\\[1em]
0, & \text{otherwise},
\end{cases}
\]
\[
r_n(x)=
\begin{cases}
\displaystyle \frac{F(v_n(x))-F(x)}{G(v_n(x))-G(x)}, & \text{if } G(v_n(x))>G(x),\\[1em]
0, & \text{otherwise},
\end{cases}
\]
and
\[
q_n(x)=
\begin{cases}
\displaystyle \frac{F(v_n(x))-F(u_n(x))}{G(v_n(x))-G(u_n(x))}, & \text{if } G(v_n(x))>G(u_n(x)),\\[1em]
0, & \text{otherwise}.
\end{cases}
\]
Then $f_n$ is Borel measurable for every $n\in\mathbb N$.
\end{lemma}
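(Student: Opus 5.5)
The idea is to write each $f_n$ as a combination of compositions of Borel functions with Borel maps, restricted to Borel sets.

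\textbf{Step 1: the maps $U,V,u_n,v_n$ are Borel.} By Lemma~\ref{Lem:Functions}, each $u_n,v_n:K\to K$ is nondecreasing, hence Borel measurable. For a monotone map $\phi:K\to K$, the preimage of any interval is an interval (convex set). Convex subsets of a compact line are Borel, since each is an intersection of at most two open or closed rays. Open sets of $K$ are unions of open intervals, but possibly uncountably many. For a nondecreasing map, however, the preimage of a ray $(c,1_K]$ or $[0_K,c)$ is itself a ray, hence open or closed. So preimages of the subbasic open rays are Borel. The difficulty is that in a nonmetrizable $K$, Borel sets are generated by open sets, which need not be countable unions of rays. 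I would handle this by noting that the preimage of any convex set under a monotone map is convex. The preimage of an open set $O=\bigcup_i I_i$ (its convex components) is a union of convex sets $\phi^{-1}(I_i)$. These are disjoint convex sets, and their union is open modulo the countably many components whose preimage has an endpoint included. I expect this to be the main obstacle. The cleanest way around it is to avoid needing Borel measurability of $K$-valued maps in full generality. Instead, I would only use preimages of rays, as in Step 2.

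Since $U=\sup_n u_n$, we have $\{U\le c\}=\bigcap_n\{u_n\le c\}$, and each set on the right is a closed or convex set. Similarly $\{U<c\}$ and the corresponding sets for $V$ are convex, because $U$ and $V$ are monotone as pointwise limits of monotone sequences. Hence $U,V$ are nondecreasing. The sets $\{x:U(x)=x\}$, $\{x:U(x)<x\}$ and their $V$-analogues should be shown Borel directly. Since $u_n(x)\le x$ always, $U(x)\le x$, and $\{U(x)<x\}=\bigcup_n\{u_n(x)<x\}$ holds provided the supremum is not attained only in the limit. Here I would use Lemma~\ref{Lem:Intersection} to control this. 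Alternatively, I would show that $\{x: U(x)<x\}$ is a union of level sets of $U$ minus their left endpoints. Then $A$, $B$, $C$ are Borel as Boolean combinations of such sets.

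\textbf{Step 2: the compositions are Borel.} Set $H=F\circ u_n$. For $t\in\mathbb R$, $\{H>t\}=u_n^{-1}(\{F>t\})$. Here $\{F>t\}$ is Borel, but we need the preimage of a Borel set under the monotone $u_n$. I would prove a general lemma: if $\phi:K\to K$ is nondecreasing, then $\phi^{-1}(E)$ is Borel for all Borel $E$. To do this, check that the family of $E$ with Borel preimage is a $\sigma$-algebra, which is automatic, and that it contains open sets. An open set $O$ is a disjoint union of convex components $I_i$. The set $\phi^{-1}(O)$ is a union of convex sets $\phi^{-1}(I_i)$, and each nonempty one is an interval. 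Their union differs from the open set $\operatorname{int}\phi^{-1}(O)$ only by endpoints $e$ of these intervals at which $\phi^{-1}(O)$ is not open. At such an endpoint, say a left endpoint $e$ that is left-dense, we have $\phi(e)\in I_i$ while $\phi(y)\notin O$ for $y<e$ near $e$. By monotonicity this forces $\phi$ to jump at $e$, or $\phi(e)$ to be the left endpoint of $I_i$. I would argue that the set of such exceptional endpoints is a closed discrete-like set whose union with the interior is Borel. The fallback is to show it is $F_\sigma$, using that it is contained in the set of points where $\phi$ jumps across a component boundary. This step is the delicate part of the proof.

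\textbf{Step 3: assembly.} Given Step 2, each of $F\circ u_n$, $F\circ v_n$, $G\circ u_n$, $G\circ v_n$ is Borel. The sets $\{G(x)>G(u_n(x))\}$ and the analogous sets are therefore Borel. Quotients on Borel sets, extended by $0$ elsewhere, are Borel, so $\ell_n$, $r_n$ and $q_n$ are Borel. Since $A$, $B$, $C$ are Borel by Step 1, $f_n=\ell_n\chi_A+r_n\chi_B+q_n\chi_C$ is Borel.
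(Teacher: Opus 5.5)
Your outline follows the paper's own route: the monotone maps $u_n,v_n$ are Borel, compose them with $F,G$, then glue along $A,B,C$. But the two steps you flag as delicate rest on claims that are false in general, and the second failure makes the lemma itself false.

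First, in Step~2, a nondecreasing $\phi:K\to K$ need not pull Borel sets back to Borel sets. On the lexicographic square $K=[0,1]^2$, the map $\phi(t,s)=(t,\tfrac12)$ is nondecreasing, and $O=S\times(0,1)$ is open for every $S\subseteq[0,1]$. Its preimage $\phi^{-1}(O)=S\times[0,1]$ meets the subspace $[0,1]\times\{0\}$ in $S\times\{0\}$; that subspace is a Sorgenfrey-type line whose Borel sets are the usual ones. So $\phi^{-1}(O)$ is not Borel when $S$ is not, and neither is your set of ``exceptional endpoints''. The compositions in the lemma can still be saved, but only through the special form of $u_n$ and $v_n$. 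Since $G$ is nondecreasing, $u_n(x)=p(G(x)-2^{-n})$, where $p(c)=\sup\{y\in K:G(y)\le c\}$ (and $p(c)=0_K$ if this set is empty). Whenever $p(c)$ lies in a convex open set $I$, so does $p(c')$ for all $c'$ in some $[c,c+\eta)$. Hence every nonempty $p^{-1}(I)$ is a nondegenerate interval of $\mathbb R$, so $p^{-1}(O)$ is a countable union of intervals and $p$ is Borel. Therefore $F\circ u_n=(F\circ p)\circ(G-2^{-n})$ is Borel, and $v_n$ is handled symmetrically. So $\ell_n,r_n,q_n$ are Borel.

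The real obstruction is Step~1: $A$, $B$, $C$ need not be Borel, and then neither is $f_n$. Take the double arrow $K=[0,1]\times\{0,1\}$ with the lexicographic order, $\pi(t,i)=t$, $G=\pi$ and $F=G^2$, both continuous.
\begin{itemize}
\item For $t\ge 2^{-n}$ one has $u_n(t,i)=(t-2^{-n},1)$, and for $t\le 1-2^{-n}$ one has $v_n(t,i)=(t+2^{-n},0)$.
\item $U(t,i)=(t,0)$ and $V(t,i)=(t,1)$, so $A=[0,1]\times\{0\}$, $B=[0,1]\times\{1\}$ and $C=\emptyset$.
\item For $2^{-n}\le t\le 1-2^{-n}$ this gives $f_n(t,0)=2t-2^{-n}$ and $f_n(t,1)=2t+2^{-n}$.
\end{itemize}
The family $\{\pi^{-1}(E)\triangle N: E\subseteq[0,1]\text{ Borel},\ N\text{ countable}\}$ is a $\sigma$-algebra. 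By the usual half-open-interval argument it contains every open subset of $K$, hence every Borel set. However, $\{f_n>2G\}\cap\pi^{-1}([2^{-n},1-2^{-n}])=[2^{-n},1-2^{-n}]\times\{1\}$ is not of this form. So for $n\ge 2$, $f_n$ is not Borel. It is not even $\mu_G$-measurable, since that set has inner measure $0$ and outer measure $1-2^{1-n}$.

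The paper's proof rests on the same two assertions. One is that nondecreasing $K$-valued maps are Borel; this happens to hold for these $u_n,v_n$ by the argument above. The other is that $\{U<\mathrm{id}\}$ is Borel because $U$ and the identity are Borel. That would need $\mathcal B(K\times K)=\mathcal B(K)\otimes\mathcal B(K)$, which holds for metrizable $K$ but fails here. So the lemma cannot be proved as stated; it needs an extra hypothesis such as metrizability of $K$, or a different construction.

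Note that here $F(x)=\int_{0_K}^x 2G\,dG$, so this is exactly the situation in the proof of Theorem~\ref{Thm:Main1} with $f=2G$. In this example $\limsup_n f_n=2G$ off $\{1_K\}$, so the theorem itself is not refuted. But the step asserting that $\hat f$ is measurable as a $\limsup$ of measurable functions is unjustified.
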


\begin{proof}
From Lemma~\ref{Lem:Functions}, we know that, for every $n\in\mathbb N$, the functions $u_n$ and $v_n$ are nondecreasing, and hence Borel measurable. Moreover, by Lemma~\ref{Lem:Intersection}, for each $x\in K$ the sequence $(u_n(x))_{n\in\mathbb N}$ is nondecreasing, while $(v_n(x))_{n\in\mathbb N}$ is nonincreasing. Since every compact line is Dedekind complete; see \cite[Theorem 2.1.5]{Ronchim}; the functions $U,V:K\to K$ defined by
\[
U(x)=\sup_{n\in\mathbb N}u_n(x)=\lim_{n\to\infty}u_n(x), \qquad
V(x)=\inf_{n\in\mathbb N}v_n(x)=\lim_{n\to\infty}v_n(x)
\]
are well defined. Furthermore, as the pointwise supremum and infimum of nondecreasing functions, respectively, both $U$ and $V$ are nondecreasing. Therefore, $U$ and $V$ are Borel measurable.

Next, since $F$ and $G$ are Borel measurable by assumption, their compositions with $u_n$ and $v_n$ are also Borel measurable. In particular, the functions
\[
x\mapsto F(u_n(x)),\qquad x\mapsto F(v_n(x)),\qquad
x\mapsto G(u_n(x)),\qquad x\mapsto G(v_n(x))
\]
are Borel measurable. Consequently, the functions $\ell_n$, $r_n$, and $q_n$ are Borel measurable as well.

Now let
\[
D=\{x\in K:\, U(x)<x\},\qquad
E=\{x\in K:\, x<V(x)\}.
\]
Since $U$, $V$, and the identity map on $K$ are Borel measurable, the sets $D$ and $E$ are Borel. Moreover, as $U(x)\le x\le V(x)$ for every $x\in K$, we have
\[
\{x\in K:\, U(x)=x\}=K\setminus D,
\qquad
\{x\in K:\, V(x)=x\}=K\setminus E.
\]
Therefore,
\[
A=(K\setminus D)\cap E,\qquad
B=D\cap(K\setminus E),\qquad
C=(K\setminus D)\cap(K\setminus E).
\]
Hence the sets $A$, $B$, and $C$ are Borel.

Finally, since $\chi_A$, $\chi_B$, and $\chi_C$ are Borel measurable and $\ell_n$, $r_n$, and $q_n$ are Borel measurable, the function
\[
f_n=\ell_n\chi_A+r_n\chi_B+q_n\chi_C
\]
is Borel measurable. This completes the proof.
\end{proof}

We are now ready to prove Theorem~\ref{Thm:Main1}.

\begin{proof}[Proof of Theorem \ref{Thm:Main1}]
We assume that $G$ is not constant, since otherwise the conclusion is immediate. For each $n \in \mathbb{N}$, let 
$f_n:K\to \mathbb{R}$ be the measurable function given by Lemma~\ref{Lem:Borel_fn}, defined for the function $F:K\to \mathbb{R}$,
\[
F(x)=\int_{0_K}^{x}f\, dG.
\]
We know from \cite[Theorem 4.1]{CanKau} that $F$ is amenable, hence Borel measurable.

The next step is to isolate an appropriate measurable set $\mathcal{Z}$ such that, for each $x \in K \setminus \mathcal{Z}$,
\begin{equation}\label{Rel:Aux3}
f(x)=\lim_{n\to \infty}f_n(x).
\end{equation}

We proceed as follows. Since $G$ is nondecreasing, the set $Q=\{b\in K:\, G(b)>L_G(b)\}$ is countable. Let $W$ be the set of all points $x\in K$ such that $G^{-1}(\{G(x)\})=[x,b_x)$ for some $b_x>x$. This also includes the situation in which $G^{-1}(\{G(x)\})=[x,b]$,
with $b$ right-isolated as in that case we set $b_x=b^+$. For each $x\in W$, we have $G(b_x)>L_G(b_x)$, hence $b_x\in Q$. Moreover, the map $x\mapsto b_x$ is injective. Indeed, if $x<y$ and $b_x=b_y$, then $y\in[x,b_x)$, so $G(y)=G(x)$, which implies $x\in G^{-1}(\{G(y)\})=[y,b_y)$, a contradiction. Therefore, $W$ is countable.

We set $\mathcal A=Q\cup W\cup\{0_K,1_K\}$. We note that $\mathcal A$ need not be $G$-null. Nevertheless, it is $G$-measurable as it is countable.

Additionally, by Theorem~\ref{Thm:differentiation}, there exists a $G$-null set $\mathcal{B} \subset K$ such that, for every $x \in K \setminus \mathcal{B}$, $F$ is $G$-differentiable at $x$ with
\[
\frac{d F}{dG}(x) = f(x).
\]

We define
\[
\mathcal{Z} = \mathcal{A} \cup \mathcal{B}.
\]

Let $x\in K$ be arbitrary, and set $\lambda_x=G(x)$. Since $G$ is nondecreasing and right-continuous, the plateau $G^{-1}(\{\lambda_x\})$ must be of one of the following forms: an interval of the form $[a,b)$ with $a<b$; or an interval of the form $[a,b]$ with $a\leq b$ and $b$ not right-isolated. If the interval is of the form $[a,b]$ and $b$ is right-isolated, then $[a,b]=[a,b^+)$, so we are reduced to the previous case.

We assume that $x\in K\setminus\mathcal Z$. We first show that in this case $G^{-1}(\{\lambda_x\})$ cannot be of the form $[a,b)$. Indeed, suppose that $a<x<b$. Then $G$ is constant on a neighborhood of $x$, which implies that $F$ is not $G$-differentiable at $x$ according to \ref{it:diff_defin1} in Definition~\ref{Def:Diff}, and hence $x\in\mathcal B$, a contradiction. Therefore, we must have $a=x$. But in this case $G^{-1}(\{G(x)\})=[x,b)$, so $x\in W\subset \mathcal A$, again a contradiction. Hence, this case cannot occur.

Next, suppose that $G^{-1}(\{\lambda_x\})$ is of the form $[a,b]$, where $a\leq b$ and $b$ is not right-isolated. First, we note that we cannot have $a<x<b$, since, as in the previous case, this would imply that $x\in\mathcal B$, a contradiction. Therefore, either $x=a$ or $x=b$. We claim that, if $x$ is the left endpoint, then $x$ cannot be left-isolated.

Indeed, suppose that  $a=x$ and $x$ is left-isolated. If $L_G(x)<G(x)$, then $x\in Q\subset\mathcal A$, which is a contradiction. Hence $L_G(x)=G(x)$. But since $x$ is left-isolated, we also have $L_G(x)=G(x^-)=G(x)$, so that $x^-\in G^{-1}(\{\lambda_x\})$, contradicting the fact that $G^{-1}(\{\lambda_x\})=[x,b]$. 

So we are left with the following  cases to consider:

\medskip
\noindent
\textbf{Case 1:} $G^{-1}(\{\lambda_x\})=\{x\}$ and $x$ is neither left-isolated nor right-isolated.
\medskip

Since $0_K<x<1_K$ and $x\notin\mathcal A$, we have that $x$ is left-dense and right-dense, and that $L_G(x)=G(x)$. It follows, since $G$ is right-continuous, that $G$ is continuous at $x$. By Lemma~\ref{Lem:Intersection}, we have $u_n(x)\nearrow x$, $v_n(x)\searrow x$, and $u_n(x)<x<v_n(x)$ for every $n\in\mathbb N$. Consequently, $G(u_n(x))<G(x)<G(v_n(x))$ for every $n\in\mathbb N$.

Let $\varepsilon>0$ be arbitrary. Since $F$ is $G$-differentiable at $x$ with $\frac{dF}{dG}(x)=f(x)$, by the Straddle Lemma \cite[Lemma 4.6]{CanKau}, there exists an open interval $J$ containing $x$ such that, for any $(u,v]\subset J$ with $u \leq x \leq v$, we have
\[
\left| F(v) - F(u) - f(x)\big(G(v) - G(u)\big) \right|
\leq \varepsilon \big(G(v) - G(u)\big).
\]
Since $u_n(x)\nearrow x$ and $v_n(x)\searrow x$, there exists $n_0 \in \mathbb{N}$ such that $(u_n(x),v_n(x)]\subset J$ for all $n \geq n_0$. Therefore,
\[
|f_n(x)-f(x)|=|q_n(x)-f(x)|=\left| \frac{F(v_n(x)) - F(u_n(x))}{G(v_n(x)) - G(u_n(x))} - f(x) \right| \leq \varepsilon,
\]
for all $n \ge n_0$, which yields \eqref{Rel:Aux3}.

\medskip
\noindent
\textbf{Case 2:} $G^{-1}(\{\lambda_x\})=[a,b]$ with $a<b$ and $b$ is not right-isolated.
\medskip

As explained above, we either have $a=x$ or $b=x$. We first consider the case $G^{-1}(\{\lambda_x\})=[x,b]$ with $x<b$. Since $0_K<x$, we have that $x$ is left-dense, and since $x\notin\mathcal A$, we have $L_G(x)=G(x)$. It follows that $G$ is continuous at $x$, because $G$ is right-continuous. By Lemma~\ref{Lem:Intersection}, we have $u_n(x)\nearrow x$ and $u_n(x)<x$ for each $n\in\mathbb N$. Therefore,
\[
G(x)>G(u_n(x))
\qquad \text{ for each } n\in\mathbb N.
\]
It follows, since $F$ is $G$-differentiable at $x$ with $\frac{dF}{dG}(x)=f(x)$, that
\[
\lim_{n\to \infty}|f_n(x)-f(x)|=\lim_{n\to \infty}|\ell_n(x)-f(x)|=\lim_{n\to \infty}\left| \frac{F(x) - F(u_n(x))}{G(x) - G(u_n(x))} - f(x) \right|=0,
\]
which yields \eqref{Rel:Aux3}.

Next, consider the case $G^{-1}(\{\lambda_x\})=[a,x]$ with $a<x$. Since $x<1_K$, it follows that $x$ is right-dense, and in this case we also have that $G$ is continuous at $x$. By Lemma~\ref{Lem:Intersection}, we have $v_n(x)\searrow x$ with $v_n(x)>x$ for each $n\in\mathbb N$, which implies
\[
G(v_n(x))>G(x)
\qquad \text{for each } n\in\mathbb N.
\]
It follows, since $F$ is $G$-differentiable at $x$ with $\frac{dF}{dG}(x)=f(x)$, that
\[
\lim_{n\to \infty}|f_n(x)-f(x)|=\lim_{n\to \infty}|r_n(x)-f(x)|=\lim_{n\to \infty}\left| \frac{F(v_n(x)) - F(x)}{G(v_n(x)) - G(x)} - f(x) \right|=0,
\]
which yields \eqref{Rel:Aux3}.

To conclude the proof, define $\hat{f}:K \to \mathbb{R}$ by
\[
\hat{f}(x) = \limsup_{n \to \infty} f_n(x).
\]
Then $\hat{f}$ is measurable, being the pointwise $\limsup$ of measurable functions. Moreover, as established above, $\hat{f}(x)=f(x)$ for every $x\in K\setminus\mathcal Z$. Now define $g,h:K\to\mathbb R$ by
\[
g=f\,\chi_{\mathcal A}, \qquad h=\hat{f}\,\chi_{K\setminus\mathcal A}.
\]
Since $\mathcal{A}$ is countable, both $g$ and $h$ are measurable. Observe that $f(x) = g(x) + h(x)$ for all $x \in K \setminus \mathcal{B}$. Hence, the set $\{x \in K : f(x) \neq g(x) + h(x)\}$ is contained in $\mathcal{B}$, which is $G$-null. It follows that $f$ is $\mu_G$-measurable.
\end{proof}

As a result of Theorem~\ref{Thm:Main1}, one immediately obtains the following generalization of \cite[Theorem 5.6]{CanKau}.

\begin{corollary}\label{Cor:CharacterizationIncreasingMeasurability}
Let $K$ be a compact line, and let $G:K\to\mathbb R$ be a nondecreasing right-continuous  function. Let $\mu_G\in\mathcal M(K)$ be the Radon measure induced by $G$. Then, for every function $f:K\to\mathbb R$, the following are equivalent:
\begin{enumerate}
    \item $f\in L_1(\mu_G)$;
    \item both $f$ and $|f|$ are $G$-integrable.
\end{enumerate}
In this case,
\[
\int_K f\,dG=\int_K f\,d\mu_G.
\]
\end{corollary}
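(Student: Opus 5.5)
The plan is to combine Theorem~\ref{Thm:Main1} with the partial characterization \cite[Theorem 5.6]{CanKau}. As stated in the introduction, that theorem characterizes $\mu_G$-integrability for nondecreasing $G$, but only for $\mu_G$-measurable integrands. Theorem~\ref{Thm:Main1} now supplies the missing measurability hypothesis. I take \cite[Theorem 5.6]{CanKau} to assert: for nondecreasing right-continuous $G$ and $\mu_G$-measurable $f$, we have $f\in L_1(\mu_G)$ if and only if $f$ and $|f|$ are $G$-integrable, and then $\int_K f\,dG=\int_K f\,d\mu_G$. If its form is instead one-directional, I supply the missing direction as explained below.

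For (1)$\Rightarrow$(2), suppose $f\in L_1(\mu_G)$. Then $f$ is $\mu_G$-measurable by definition, so the measurability hypothesis of \cite[Theorem 5.6]{CanKau} holds outright. That result then gives that $f$ is $G$-integrable with $\int_K f\,dG=\int_K f\,d\mu_G$. Since $|f|\in L_1(\mu_G)$ as well, applying the same result to $|f|$ shows that $|f|$ is $G$-integrable.

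For (2)$\Rightarrow$(1), suppose $f$ and $|f|$ are $G$-integrable. By Theorem~\ref{Thm:Main1}, both $f$ and $|f|$ are $\mu_G$-measurable. The remaining task is to show $\int_K|f|\,d\mu_G<\infty$, and this is the step I expect to carry the real content if \cite[Theorem 5.6]{CanKau} does not already give it.

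My first attempt at that step would be a monotone convergence argument. Set $h_k=\min(|f|,k)$. Each $h_k$ is bounded and $\mu_G$-measurable, hence lies in $L_1(\mu_G)$ because $\mu_G$ is finite. By the direction already proved, each $h_k$ is $G$-integrable and $\int_K h_k\,d\mu_G=\int_K h_k\,dG$.

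Next I compare $h_k$ with $|f|$. Both are $G$-integrable and $0\le h_k\le|f|$, and $G$ is nondecreasing, so the Riemann sums of $h_k$ are dominated by those of $|f|$. Hence $\int_K h_k\,dG\le\int_K|f|\,dG$.

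Combining these, the monotone convergence theorem for $\mu_G$ gives
\[
\int_K|f|\,d\mu_G=\lim_k\int_K h_k\,d\mu_G\le\int_K|f|\,dG<\infty .
\]
Therefore $f\in L_1(\mu_G)$, and the equality of the two integrals follows from the first direction.
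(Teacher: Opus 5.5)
Your proposal follows the paper's proof. For (1)$\Rightarrow$(2) you apply \cite[Theorem 5.6]{CanKau} to an integrand that is measurable by hypothesis. For (2)$\Rightarrow$(1) you get measurability of $f$ and $|f|$ from Theorem~\ref{Thm:Main1} and then apply \cite[Theorem 5.6]{CanKau} again. The paper uses that result in both directions, so your monotone convergence fallback is not needed. It is still a correct observation that only the forward direction plus measurability is really used.

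The one step you omit is the paper's preliminary normalization: it first replaces $G$ by $G+|G(0_K)|$ so that $G$ is positive. This is not cosmetic. A nondecreasing $G$ may have $G(0_K)<0$, and then $\mu_G(\{0_K\})=G(0_K)<0$, so $\mu_G$ is not a positive measure. The paper invokes \cite[Theorem 5.6]{CanKau} only for positive $G$, just as Theorem~\ref{Thm:differentiation} assumes positivity. In your fallback, the comparison of Riemann sums fails in the term $f(0_K)G(0_K)$, and monotone convergence cannot be applied to $\mu_G$ as written.

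The repair is immediate. Replacing $G$ by $G+c$ adds exactly $c\,f(0_K)$ to every Riemann sum and a point mass $c$ at $0_K$ to $\mu_G$. So it changes neither the $G$-integrability of $f$ and $|f|$ nor membership in $L_1$. It also shifts both sides of $\int_K f\,dG=\int_K f\,d\mu_G$ by the same amount $c\,f(0_K)$.
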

\begin{proof}
Without loss of generality, we may assume that $G$ is positive. Otherwise, we replace $G$ by $G+|G(0_K)|$. If $f$ is $\mu_G$-integrable, then $f$ is $\mu_G$-measurable, so the conclusion follows from \cite[Theorem 5.6]{CanKau}.

Conversely, if both $f$ and $|f|$ are $G$-integrable, then by Theorem~\ref{Thm:Main1} both $f$ and $|f|$ are $\mu_G$-measurable. Hence $f$ is measurable, and \cite[Theorem 5.6]{CanKau} applies again, yielding that $f$ is Lebesgue $\mu_G$-integrable and that
\[\int_K f\,dG=\int_K f\,d\mu_G.\]
\end{proof}

\section{Hake's Theorem for the $G$-Integral}
\label{Sec:Hake}

In this section, we establish a Hake-type theorem for the $G$-integral. In other words, $G$-integration on compact lines can be recovered from suitable one-sided limits of integrals of restricted functions.

We first recall an important fact concerning the $G$-integral in order to avoid possible confusion. Given an interval $I$ of a compact line $K$, we write
\[
f_I=f\chi_I,
\]
where $\chi_I$ denotes the indicator function of $I$.

By the results of \cite[\S 3.4]{CanKau}, if $I$ is compact, then $f_I$ is $G$-integrable if and only if the restriction $f|_I$ is $G|_I$-integrable (for simplicity, we shall say that $f|_I$ is $G$-integrable). However, the corresponding integrals need not coincide; see \cite[Proposition 3.15]{CanKau}.

In the first part of the proof of Theorem~\ref{Thm:Hake}, we shall use that
\[
\int_K f_{[0_K,y]}\,dG=\int_{0_K}^y f\,dG,
\]
whereas in the second part
\[
\int_K f_{(y,1_K]}\,dG=\int_K f_{[y,1_K]}\,dG-f(y)\int_K \chi_{\{y\}}\,dG=\int_K f_{[y,1_K]}\,dG-f(y)(G(y)-L_G(y)).
\]

The following elementary lemma will be useful in the proof of the next theorem. It shows that, at the endpoints of a compact line, either $G$ is locally constant or the point has a countable local basis.

\begin{lemma}\label{Lem:FirstCountableCompactLines}
Let $K$ be a compact line, and let $G:K\to\mathbb{R}$ be a nondecreasing function. If $a=0_K$ or $a=1_K$, then either
\begin{itemize}
\item there exists a neighborhood $V$ of $a$ such that $V\setminus \{a\}\neq \emptyset$, and $G$ is constant on $V\setminus \{a\}$, or
\item $a$ has a countable local basis.
\end{itemize}
\end{lemma}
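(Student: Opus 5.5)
We argue by cases on whether $a$ is left/right-isolated and, if not, on the behaviour of $G$ near $a$. We treat $a=1_K$; the case $a=0_K$ is symmetric, with the order reversed and infima in place of suprema.

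\emph{Case 1: $1_K$ is left-isolated.} Then $\{1_K\}$ is open, so $\{\{1_K\}\}$ is a countable local basis and we are done. (Alternatively, $V=[1_K^-,1_K]$ satisfies $V\setminus\{1_K\}=\{1_K^-\}\neq\emptyset$, on which $G$ is trivially constant.)

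\emph{Case 2: $1_K$ is left-dense and $G$ is not constant on $(c,1_K)$ for any $c<1_K$.} Put $s=\sup_{y<1_K}G(y)=\lim_{y\nearrow 1_K}G(y)$, which exists since $G$ is nondecreasing and bounded above by $G(1_K)$. If some $y_0<1_K$ had $G(y_0)=s$, then $G$ would be constant on $[y_0,1_K)$, hence on the neighborhood $(y_0,1_K]$ minus $1_K$, which is nonempty because $1_K$ is left-dense. This contradicts the case hypothesis. Therefore $G(y)<s$ for all $y<1_K$.

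Next, construct a sequence inductively. Choose $y_1<1_K$ arbitrary. Given $y_n<1_K$, choose $y_{n+1}>y_n$ with $y_{n+1}<1_K$ and
\[
G(y_{n+1})>s-2^{-n}.
\]
Such a point exists by the definition of $s$ as a supremum together with monotonicity.

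We claim the intervals $(y_n,1_K]$ form a local basis at $1_K$. Let $(c,1_K]$ be any basic neighborhood, with $c<1_K$. Since $G(c)<s$, pick $n$ with $s-2^{-n}>G(c)$. Then $G(y_{n+1})>G(c)$, and since $G$ is nondecreasing this forces $y_{n+1}>c$. Hence $(y_{n+1},1_K]\subset(c,1_K]$. Each $(y_n,1_K]$ is open and contains $1_K$, so this countable family is a local basis.

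\emph{Remaining possibility.} If $1_K$ is left-dense and $G$ is constant on some $(c,1_K)$, we take $V=(c,1_K]$. Then $V\setminus\{1_K\}=(c,1_K)$ is nonempty because $1_K$ is left-dense, and the first alternative holds.

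The main subtlety is the step in Case 2 showing that $G(y)<s$ for every $y<1_K$ and then converting value bounds into order bounds. It relies only on monotonicity of $G$ and Dedekind completeness of $K$; no right-continuity of $G$ is needed.
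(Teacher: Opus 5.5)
Your proof is correct and follows essentially the same route as the paper: choose points $y_n<1_K$ whose $G$-values approach $\sup_{y<1_K}G(y)$, then use monotonicity to turn $G(y_n)>G(c)$ into $y_n>c$, so that the sets $(y_n,1_K]$ form a countable local basis. The only cosmetic difference is that the paper first redefines $G(1_K)$ to equal $L_G(1_K)$ and works with $G(1_K)$ in place of your $s$, whereas you work with $s$ directly and explicitly check that $G(y)<s$ for all $y<1_K$ (a step the paper leaves implicit).
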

\begin{proof}
If $0_K=1_K$, then $K$ consists of a single point and the conclusion is trivial. Thus we may assume that $0_K\neq 1_K$.

We assume that $a=1_K$ (the case $a=0_K$ is analogous). If $1_K$ is isolated, then it has a countable local basis, so we may assume that $1_K$ is not isolated.

Redefining $G(1_K)$ if necessary, we may assume that $L_G(1_K)=G(1_K)$. This does not affect the conclusion of the lemma, since changing the value of $G$ at $1_K$ does not alter whether $G$ is constant on $V\setminus\{1_K\}$ for some neighborhood $V$ of $1_K$.

Suppose that $G$ is not constant on any neighborhood of $1_K$. Then $G(1_K)>G(0_K)$, since otherwise $G$ would be constant on all of $K$. For each $n\in \mathbb{N}$, choose $a_n<1_K$ such that
\[
G(1_K)-\frac{G(1_K)-G(0_K)}{n}<G(a_n)<G(1_K).
\]

We claim that the family $\mathcal{V}=\{(a_n,1_K]:n\in \mathbb{N}\}$ is a local basis at $1_K$. Let $U$ be a neighborhood of $1_K$. Then there exists $c<1_K$ such that $(c,1_K]\subset U$. Since $G$ is nondecreasing and not constant on any neighborhood of $1_K$, there exists $x>c$ with $G(x)>G(c)$. As $G(a_n)\to G(1_K)$, we can choose $n$ such that $G(a_n)>G(c)$. Since $G$ is nondecreasing, it follows that $a_n>c$, and therefore $(a_n,1_K]\subset (c,1_K]\subset U$. Thus $\mathcal{V}$ is a countable local basis at $1_K$.
\end{proof}

We now present the proof of one of our main results. The proof follows the strategy of \cite[Theorem 12.8]{Bartle}, adapted to the present setting.

\begin{proof}[Proof of Theorem \ref{Thm:Hake}]
We will prove in detail the first part of the theorem and later briefly indicate the adaptations for the second part, since the proof is similar. Following the conclusion of the Lemma \ref{Lem:FirstCountableCompactLines}, we distinguish two cases. The first case is when there is a neighborhood $V$ of $1_K$ such that $G$ holds the same fixed value $\lambda$ on $V\setminus \{1_K\}$. 

Since $G$ is nondecreasing, we may fix $y\in V\setminus \{1_K\}$ such that $G$ s constant on $[y,1_K)$. By the definition of the integral and since $G$ is constant on $[y,1_K)$, for every $x\in [y,1_K]$,
\[\int_x^{1_K}f \, dG=f(x)G(x)+f(1_K)(G(1_K)-L_G(1_K)).\]
Since $f$ is $G$-integrable at $[0_K,y]$, it follows from \cite[Theorem 3.16]{CanKau} that $f$ is $G$-integrable in $K$. Now we have, again by \cite[Theorem 3.16]{CanKau}.
\begin{align*}
A&=\lim_{x \nearrow 1_K} \int_{0_K}^xf \, dG=\lim_{x \nearrow 1_K}\left(\int_K f \, dG-\int_x^{1_K}f \, dG+f(x)G(x)\right)\\
&=\int_K f \, dG-f(1_K)(G(1_K)-L_G(1_K)).
\end{align*}

In the second case, we may assume that $G$ is not constant on any neighborhood of $1_K$. Then, according to Lemma~\ref{Lem:FirstCountableCompactLines}, $1_K$ has a countable local basis, from which we can fix a strictly increasing sequence $(c_n)_n$ in $[0_K,1_K)$ such that $c_0=0_K$ and $\lim_{n\to\infty} c_n=1_K$.
Given $\varepsilon>0$, we choose $r\in\mathbb{N}$ such that
\[
L_G(1_K)-G(c_r)<\frac{\varepsilon}{|f(1_K)|+1}
\]
and such that, whenever $y \in (c_r,1_K)$,
\[\left|\int_{0_K}^y f \, dG-A\right|<\varepsilon.\]

We let $I_1=[c_0,c_1]$ and, for each $k>1$, we let $I_k=(c_{k-1},c_k]$. For each $k\ge1$, we define $f_{I_k}=f\cdot\chi_{I_k}$. Since $f$ is $G$-integrable on $[0_K,y]$ for every $y<1_K$, it follows from \cite[Proposition 3.14]{CanKau} that each $f_{I_k}$ is $G$-integrable, and we may fix $\delta_k$ to be an $\varepsilon/2^{k+1}$-gauge for $f_{I_k}$.

Without loss of generality, by shrinking the gauges if necessary and using the right-continuity of $G$, we may assume that $\delta_1(0_K)\subset [0_K,c_1)$ and that, for each $k\geq 1$:
\begin{itemize}
\item[(a)] $\delta_k(x)\subset (c_{k-1},c_k)$ if $x\in (c_{k-1},c_k)$;
\item[(b)] $\delta_k(c_k)\subset (c_{k-1},c_{k+1})$;
\item[(c)] whenever $x\in \delta_k(c_k)\cap [c_k,c_{k+1})$, then
\[|G(x)-G(c_k)|<\frac{\varepsilon}{2^{k+1}}.\]
\end{itemize}

We now define a gauge on $K$ by the formula
\[\delta(x) =
\begin{cases}
\delta_1(0_K), & \text{if } x=0_K,\\
\delta_k(x), & \text{if } x \in (c_{k-1},c_k],\\
(c_r,1_K], & \text{if } x=1_K.
\end{cases}\]

Let $P=\{([x_{i-1},x_i],t_i):i=1,\ldots,n\}$ be a $\delta$-fine partition of $K$.  We note that $1_K$ does not belong to any $I_k$, then $t_n=1_K$. Indeed, if $t_n<1_K$, then $t_n\in (c_{k-1},c_k]$ for some $k\ge1$, and hence $\delta(t_n)=\delta_k(t_n)\subset (c_{k-1},c_{k+1})\subset [0_K,1_K)$, which is impossible because $1_K\in(x_{n-1},x_n]\subset\delta(t_n)$. Therefore $t_n=1_K$. Moreover, since $(x_{n-1},x_n]\subset (c_r,1_K]$, we deduce that $c_r\leq x_{n-1}$. Let $s$ be the smallest integer such that $x_{n-1}\le c_s$.

By the construction of the gauge, for each $c_k$, with $1\leq k\leq s-1$, if $c_k\in (x_{i-1},x_i]$, then $t_i=c_k$. Indeed, since $k\le s-1$, we have $c_k\le c_{s-1}<x_{n-1}$, so $t_i\neq 1_K$. Hence $t_i\in (c_{j-1},c_j]$ for some $j\in\mathbb N$, and therefore $\delta(t_i)=\delta_j(t_i)$. Since $P$ is $\delta$-fine, $(x_{i-1},x_i]\subset \delta(t_i)$, and thus $c_k\in \delta_j(t_i)$. If $t_i\in (c_{j-1},c_j)$, then by (a) we have $\delta_j(t_i)\subset (c_{j-1},c_j)$, which is impossible. Therefore $t_i=c_j$. In this case, by (b), $\delta_j(c_j)\subset (c_{j-1},c_{j+1})$, and since $c_k\in \delta_j(c_j)$, the only possibility is $j=k$. Thus $t_i=c_k$.

By inserting additional points and tags, we may assume that $\{c_0,c_1,\ldots,c_{s-1}\}$ appear as both tags and division points of the partition; see \cite[Remark 3.6]{CanKau}.  Let $p_k$ and $q_k$ be such that $c_{k-1}=x_{p_k}$ and $c_k=x_{q_k}$, for each $1\leq k \leq s-1$. We note that the subintervals contained in $I_1=[c_0,c_1]$ and in $I_k=(c_{k-1},c_k]$, for $2\leq k \leq s-1$, correspond to the indices $i$ with $p_k+1\le i\le q_k$. Moreover, $q_k = p_{k+1}$ for each $1\leq k\leq s-2$, so that these intervals are adjacent and cover $[0_K,c_{s-1}]$ without overlap.

Since, for each $1\leq k \leq s-1$, $\delta_k$ is an $\varepsilon/2^{k+1}$-gauge for $f_{I_k}$, we have
\[
\left|f(0_K)G(0_K)+\sum_{i=p_1}^{q_1}f(t_i)(G(x_i)-G(x_{i-1}))+\Lambda-\int_K f_{I_1}\, dG\right|<\frac{\varepsilon}{2^2},
\]
where $\Lambda$ arises from the possible contribution of the interval containing $c_k$. More precisely, if $c_k$ is the tag of the interval $[c_k,x_{q_k+1}]$, then $\Lambda = f(c_k)(G(x_{q_k+1})-G(c_k))$, otherwise $\Lambda=0$. In any case, by condition (c), we have $|\Lambda|<\frac{\varepsilon}{2^2}$, and therefore
\[
\left|f(0_K)G(0_K)+\sum_{i=p_1}^{q_1}f(t_i)(G(x_i)-G(x_{i-1}))-\int_K f_{I_1}\, dG\right|<\frac{\varepsilon}{2}.
\]
A similar argument shows that, for each $1<k<s$,
\[
\left|\sum_{i=p_k}^{q_k}f_{I_k}(t_i)(G(x_i)-G(x_{i-1}))-\int_K f_{I_k}\, dG\right|<\frac{\varepsilon}{2^k}.
\]

Since $c_{s-1}<x_{n-1}\le c_s$, we consider the $\delta_s$-fine tagged system $\mathcal{S}=\{([x_{i-1},x_i],t_i): q_{s-1}+1\le i\le n-1\}$.
By the construction of the gauge, each of these intervals satisfies $(x_{i-1},x_i]\subset (c_{s-1},c_s)$, and hence $(x_{i-1},x_i]\subset \delta_s(t_i)$ for each $q_{s-1}+1\le i\le n-1$. Therefore, $\mathcal{S}$ is $\delta_s$-fine. We may therefore apply the Saks--Henstock Lemma \cite[Lemma 3.18]{CanKau} to obtain
\begin{equation}\label{Rel:Aux1}
\left|
\sum_{i=q_{s-1}+1}^{n-1}
\left(
f(t_i)(G(x_i)-G(x_{i-1}))
+ f(x_{i-1})G(x_{i-1})
- \int_{x_{i-1}}^{x_i} f\,dG
\right)
\right|
\le \frac{\varepsilon}{2^{s}}.
\end{equation}

By \cite[Propositions 3.12 and 3.15]{CanKau}, we may write, for each $q_{s-1}+1\leq i\leq n-1$,
\begin{align*}
\int_{x_{i-1}}^{x_i} f \, dG & - f(x_{i-1}) G(x_{i-1})= \left(\int_{x_{i-1}}^{x_i} f \, dG - f(x_{i-1}) L_G(x_{i-1}) \right) \\
& \quad - f(x_{i-1}) \left( G(x_{i-1}) - L_G(x_{i-1}) \right)= \int_K f|_{(x_{i-1},x_i]} \, dG.
\end{align*}
We can therefore rewrite \eqref{Rel:Aux1} as
\[
\left|
\sum_{i=q_{s-1}+1}^{n-1} f(t_i)(G(x_i)-G(x_{i-1}))
- \int_K f|_{(c_{s-1},x_{n-1}]} \, dG
\right|
\leq \frac{\varepsilon}{2^{s}}.
\]

Now we gather all the information above, and recalling that $G$ is nondecreasing:

\begin{align*}
&\left|S(f,G,P)-(A+f(1_K)(G(1_K)-L_G(1_K)))\right|\\
&=\left|f(0_K)G(0_K)+\sum_{i=1}^n f(t_i)(G(x_i)-G(x_{i-1}))-(A+f(1_K)(G(1_K)-L_G(1_K)))\right|\\
&=\left|f(0_K)G(0_K)
+\sum_{k=1}^{s-1}\left(\sum_{i=p_k}^{q_k} f(t_i)(G(x_i)-G(x_{i-1}))\right)
+\sum_{i=q_{s-1}+1}^{n-1} f(t_i)(G(x_i)-G(x_{i-1}))\right.\\
&\quad \left.
+f(1_K)(G(1_K)-G(x_{n-1}))
-(A+f(1_K)(G(1_K)-L_G(1_K)))\right|\\
&<\left|\sum_{k=1}^{s-1}\int_K f_{I_k}\,dG
+\int_K f|_{(c_{s-1},x_{n-1}]}\, dG
+f(1_K)(L_G(1_K)-G(x_{n-1}))
-A\right|+2\varepsilon\\
&\leq \left|\int_{0_K}^{x_{n-1}} f\,dG-A\right|
+|f(1_K)|(L_G(1_K)-G(x_{n-1}))
+2\varepsilon\\
&<4\varepsilon.
\end{align*}
And we are done with the first part.

The second part follows by adapting the proof of the first part. We briefly indicate the required modifications. We consider a decreasing sequence $(c_n)_n$ converging to $0_K$ and intervals of the form $(c_k,c_{k-1}]$. The construction of the gauges and the refinement of the partition are analogous, with the roles of $0_K$ and $1_K$ interchanged.

The only additional care is needed near $0_K$, where the term $f(0_K)G(0_K)$ appears due to the definition of the integral. The estimates are handled similarly to those in the first part, using the right-continuity of $G$ at $0_K$ and an application of the Saks--Henstock Lemma \cite[Lemma 3.18]{CanKau}.
\end{proof}

\begin{remark}
As an application of the theorem, we illustrate the necessity of the additional term $f(1_K)(G(1_K)-L_G(1_K))$. Consider the following example. Let $K=[0,1]$ and define $f,G:K\to \mathbb{R}$ by
\[
f(x)=G(x)=
\begin{cases}
0, & \text{if } x<1,\\
1, & \text{if } x=1.
\end{cases}
\]
Then
\[\int_K f\, dG = f(1_K)(G(1_K)-L_G(1_K))=1,\]
while
\[\lim_{y \nearrow 1_K} \int_{0_K}^y f \, dG=0.\]
\end{remark}

To conclude this section, we present the converse of Theorem~\ref{Thm:Hake}. We first need the following simple lemma.

\begin{lemma}\label{Lem:Continuity0_K}
Let $K$ be a compact line, and let $G:K\to\mathbb R$ be an amenable function. Then $\lim_{y\searrow 0_K}L_G(y)=G(0_K)$.
\end{lemma}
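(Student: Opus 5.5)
We need to show $\lim_{y\searrow 0_K}L_G(y)=G(0_K)$ for an amenable $G$ (regulated and right-continuous). The statement only has content when $0_K$ is right-dense. If $0_K$ is right-isolated, then $\{0_K\}$ is open and the limit $y\searrow 0_K$ is vacuous. Equivalently, along $y=0_K^+$ we have $L_G(0_K^+)=G(0_K)$ by definition, since $0_K^+$ is left-isolated with predecessor $0_K$. I will record this degenerate case first and then assume $0_K$ is right-dense.

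Fix $\varepsilon>0$. Since $G$ is regulated, hence right-continuous at the right-dense point $0_K$, I choose $b>0_K$ such that $|G(z)-G(0_K)|<\varepsilon$ for all $z\in[0_K,b)$. Now take any $y\in(0_K,b)$ and split according to the definition of $L_G(y)$.

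\begin{itemize}
\item \emph{$y$ left-isolated.} Then $L_G(y)=G(y^-)$ with $y^-\in[0_K,y)\subset[0_K,b)$, so $|L_G(y)-G(0_K)|<\varepsilon$.
\item \emph{$y$ left-dense.} Then $L_G(y)=\lim_{z\nearrow y}G(z)$, and this limit exists by regularity. Every $z$ in $(0_K,y)$ lies in $[0_K,b)$, so it is a limit of values within $\varepsilon$ of $G(0_K)$. Hence $|L_G(y)-G(0_K)|\le\varepsilon$.
\end{itemize}

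In either case $|L_G(y)-G(0_K)|\le\varepsilon$ for all $y\in(0_K,b)$, which gives $\lim_{y\searrow 0_K}L_G(y)=G(0_K)$.

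I expect no real obstacle. The only points needing care are the bookkeeping of the definition $L_G(0_K)=0$, which is irrelevant here because $y\neq 0_K$, and ensuring that in the left-dense case the approximating points $z$ genuinely lie in $(0_K,y)$. That interval is nonempty because $y$ is left-dense.
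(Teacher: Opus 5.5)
Your proof is correct and follows essentially the same route as the paper. Both arguments use right-continuity at $0_K$ to obtain a right neighborhood on which $G$ is within $\varepsilon$ of $G(0_K)$, then split on whether $y$ is left-isolated ($L_G(y)=G(y^-)$) or left-dense ($L_G(y)$ a limit of such values); your choice of the half-open $[0_K,b)$ also covers the case $y^-=0_K$ more cleanly than the paper, which writes $0_K<y^-$.
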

\begin{proof}
Let $\varepsilon>0$ be arbitrary. Since $G$ is right-continuous at $0_K$, there exists $c\in(0_K,1_K)$ such that $|G(t)-G(0_K)|<\varepsilon$ for every $t\in(0_K,c]$. Now let $y\in(0_K,c]$ be arbitrary. If $y$ is left-isolated, then $L_G(y)=G(y^-)$. Since $0_K<y^-\le y\le c$, we have $y^-\in(0_K,c]$, and therefore
\[|L_G(y)-G(0_K)|=|G(y^-)-G(0_K)|<\varepsilon.\]

If $y$ is left-dense, then by definition $L_G(y)=\lim_{t\nearrow y}G(t)$. For every $t\in(0_K,y)$ we have $t\in(0_K,c]$, hence $|G(t)-G(0_K)|<\varepsilon.$ Passing to the limit as $t\nearrow y$, we obtain
\[
|L_G(y)-G(0_K)|\le \varepsilon.
\]
Since $\varepsilon>0$ is arbitrary, we deduce that $\lim_{y\searrow 0_K}L_G(y)=G(0_K)$.
\end{proof}

\begin{theorem}\label{Thm:HakeConverse}
Let $K$ be a compact line and let $G:K\to\mathbb R$ be a nondecreasing right-continuous function. If $1_K$ is left-dense and $f$ is $G$-integrable on $K$, then
\[
\lim_{y\nearrow 1_K}\int_K f_{[0_K,y]}\,dG
=
\int_K f\,dG-f(1_K)(G(1_K)-L_G(1_K)).
\]

If $0_K$ is right-dense and $f$ is $G$-integrable on $K$, then
\[
\lim_{y \searrow 0_K} \int_K f_{(y,1_K]} \, dG
=
\int_K f\,dG-f(0_K)G(0_K).
\]
\end{theorem}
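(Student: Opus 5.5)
For the first part, I would use the additivity \cite[Theorem 3.16]{CanKau} and the regularity of the indefinite integral $F(x)=\int_{0_K}^x f\,dG$ from \cite[Theorem 4.1]{CanKau}. Since $f$ is $G$-integrable on $K$, for every $y\in(0_K,1_K)$ the restriction to $[0_K,y]$ is integrable (\cite[\S 3.4]{CanKau}). By \cite[Theorem 3.16]{CanKau},
\[
\int_K f\,dG=\int_{0_K}^y f\,dG+\int_y^{1_K} f\,dG-f(y)G(y).
\]
Also $\int_K f_{[0_K,y]}\,dG=\int_{0_K}^y f\,dG=F(y)$. So the claim reduces to
\[
\lim_{y\nearrow 1_K}F(y)=L_F(1_K)=F(1_K)-f(1_K)(G(1_K)-L_G(1_K)).
\]
Since $1_K$ is left-dense and $F$ is amenable, the left limit $L_F(1_K)$ exists. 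The jump formula $F(x)-L_F(x)=f(x)(G(x)-L_G(x))$ is the one already used in the proof of Theorem~\ref{Thm:differentiation}, via \cite[Theorem 4.1]{CanKau} and \cite[Proposition 4.5]{CanKau}. Applying it at $x=1_K$, together with $F(1_K)=\int_K f\,dG$, gives the first identity.

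For the second part I would do the same for the right-hand tails. Using the identity recalled at the start of this section,
\[
\int_K f_{(y,1_K]}\,dG=\int_K f_{[y,1_K]}\,dG-f(y)(G(y)-L_G(y)).
\]
I would then express $\int_K f_{[y,1_K]}\,dG$ through $F$ via \cite[Theorem 3.16, Propositions 3.12 and 3.15]{CanKau}. The expected outcome is $\int_K f_{(y,1_K]}\,dG=F(1_K)-F(y)$, the mass on $(y,1_K]$. Then, as $y\searrow 0_K$, right-continuity of $F$ at $0_K$ (amenability) gives $F(y)\to F(0_K)$. Moreover $F(0_K)=\int_{0_K}^{0_K}f\,dG=f(0_K)G(0_K)$, since the Riemann sum on the degenerate line is just $f(0_K)G(0_K)$. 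This yields $\int_K f\,dG-f(0_K)G(0_K)$.

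If the bookkeeping instead leaves residual terms, I would control them as follows. The term $f(y)G(y)$ would pair with $-f(y)L_G(y)$. Lemma~\ref{Lem:Continuity0_K} gives $L_G(y)\to G(0_K)$, and right-continuity gives $G(y)\to G(0_K)$, but $f(y)$ need not be bounded, so these limits alone do not control the product. In that case I would need the jump formula $F(y)-L_F(y)=f(y)(G(y)-L_G(y))$ to absorb such terms into differences of $F$ and $L_F$. Lemma~\ref{Lem:Continuity0_K} applied to $F$ then gives $L_F(y)\to F(0_K)$.

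The main obstacle is the exact form of the restricted integrals, that is, the precise correction terms from \cite[Proposition 3.15]{CanKau} relating $\int_K f_{[a,b]}\,dG$ to $\int_a^b f\,dG$. The aim is to write every quantity as a combination of $F$ and $L_F$, so that only the amenability of $F$ and Lemma~\ref{Lem:Continuity0_K} are needed and no boundedness of $f$ near the endpoints is required.
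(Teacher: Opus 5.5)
Your proposal is correct. The first part is exactly the paper's argument: identify $\int_K f_{[0_K,y]}\,dG$ with $F(y)$, then apply the jump formula $F(c)-L_F(c)=f(c)(G(c)-L_G(c))$ from \cite[Theorem 4.1]{CanKau} at $c=1_K$. The additivity identity you write down first is not needed.

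In the second part, the identity you call the ``expected outcome'' is the one the paper uses. It follows at once from linearity: $f=f_{[0_K,y]}+f_{(y,1_K]}$ with $f$ and $f_{[0_K,y]}$ both integrable, so $\int_K f_{(y,1_K]}\,dG=F(1_K)-F(y)$. Your route through \cite[Proposition 3.15]{CanKau} and \cite[Theorem 3.16]{CanKau} also works: the correction terms cancel exactly, leaving $\int_y^{1_K}f\,dG-f(y)G(y)=F(1_K)-F(y)$.

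From there the paper takes a detour. It rewrites $F(y)=L_F(y)+f(y)(G(y)-L_G(y))$ and invokes Lemma~\ref{Lem:Continuity0_K} for $F$ and $G$. You instead use right-continuity of the amenable function $F$ at the right-dense point $0_K$, which is shorter and cleaner. As you noticed, $\lim_{y\searrow 0_K}(G(y)-L_G(y))=0$ alone does not control $f(y)(G(y)-L_G(y))$ when $f$ is unbounded, and the paper's write-up glosses over this. The paper also writes $L_F(0_K)$ where $F(0_K)=f(0_K)G(0_K)$ is meant, since $L_F(0_K)=0$ by definition.

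Your fallback plan is the paper's route, made rigorous by the observation $f(y)(G(y)-L_G(y))=F(y)-L_F(y)\to 0$.
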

\begin{proof}
We first notice that, by \cite[Theorem 3.15]{CanKau}, the assumption that $f$ is $G$-integrable implies that $f_{[0_K,y]}$ and $f_{[y,1_K]}$ are $G$-integrable for every $y\in K$.

For the first part, we apply \cite[Theorem 4.1]{CanKau} to the function $F(x)=\int_{0_K}^x f\,dG$ with $c=1_K$ to obtain
\[
F(1_K)-L_F(1_K)=f(1_K)(G(1_K)-L_G(1_K)).
\]
In other words,
\[
\lim_{y\nearrow 1_K}\int_K f_{[0_K,y]}\,dG=L_F(1_K)=\int_K f\,dG-f(1_K)(G(1_K)-L_G(1_K)).
\]

On the other hand, again by \cite[Theorem 4.1]{CanKau}, this time with $c=y\in(0_K,1_K)$, we obtain
\[F(y)-L_F(y)=f(y)(G(y)-L_G(y)).\]
By additivity on intervals and \cite[Proposition 3.15]{CanKau}, we have
\begin{align*}
F(y)&=\int_{0_K}^y f\,dG=\int_K f_{[0_K,y]}\,dG
=\int_K f\,dG-\int_K f_{(y,1_K]}\,dG.
\end{align*}
Whence
\[
\int_K f_{(y,1_K]}\,dG=\int_K f\,dG-L_F(y)-f(y)(G(y)-L_G(y)).
\]

Taking the limit as $y\searrow 0_K$, and noticing that, since $F$ (see \cite[Theorem 4.1]{CanKau}) and $G$ are amenable, we may apply Lemma~\ref{Lem:Continuity0_K} to obtain
\[
\lim_{y\searrow 0_K}L_F(y)=L_F(0_K)=f(0_K)G(0_K),\qquad \lim_{y\searrow 0_K}(G(y)-L_G(y))=0,
\]
which leads to
\[
\lim_{y \searrow 0_K} \int_K f_{(y,1_K]} \, dG=\int_K f\,dG-f(0_K)G(0_K).
\]
completing the proof.
\end{proof}

\section{Measurability for $\mathrm{NBV}(K)$ Integrators}
\label{Sec:MeasurableNBV}
In this section, we extend the measurability results obtained in Section~\ref{Sec:Measurable} to integrators in $\mathrm{NBV}(K)$. We begin with the following definition, which will play an important role in this study.

\begin{defin}\label{Def:VariationFunction}
Let $K$ be a compact line and let $G\in \mathrm{NBV}(K)$. The \emph{variation function} of $G$ is the function $T_G:K\to\mathbb{R}$ defined by
\[
T_G(x)=|G(0_K)|+\operatorname{Var}(G|_{[0_K,x]}),
\]
where $V$ denotes the total variation (see Definition~\ref{Def:Variation}).
\end{defin}

The following proposition and Theorem~\ref{Thm:VariationMeasure} can be derived from results in \cite[Corollary~2.4.7]{Ronchim}. For the convenience of the reader, we present complete proofs adapted to our setting.

\begin{prop}\label{Prop:VariationDoneRight}
Let $K$ be a compact line and let $G\in \mathrm{NBV}(K)$. Let $T_G$ be as in Definition~\ref{Def:VariationFunction}. Then $T_G$ is nondecreasing and right-continuous. Moreover, setting $G_1=T_G$ and $G_2=T_G-G$, we have that $G_1$ and $G_2$ are nondecreasing and right-continuous, and
\[G=G_1-G_2.\]
\end{prop}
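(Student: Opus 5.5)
We need to prove that $T_G$ is nondecreasing and right-continuous, and that $T_G-G$ is nondecreasing; then $G = G_1 - G_2$ holds trivially.

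\emph{Monotonicity.} For $x<y$ in $K$ and any division $D$ of $[0_K,x]$, appending the point $y$ gives a division of $[0_K,y]$ with
\[
\operatorname{Var}(G|_{[0_K,y]},D\cup\{y\})=\operatorname{Var}(G|_{[0_K,x]},D)+|G(y)-G(x)|.
\]
Taking suprema yields the additivity inequality
\[
T_G(y)\ \ge\ T_G(x)+|G(y)-G(x)|.
\]
The reverse inequality, $\operatorname{Var}(G|_{[0_K,y]})=\operatorname{Var}(G|_{[0_K,x]})+\operatorname{Var}(G|_{[x,y]})$, follows by refining any division of $[0_K,y]$ with the point $x$; refining only increases the sum, by the triangle inequality. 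Consequently:
\begin{itemize}
\item $T_G(y)\ge T_G(x)$, so $T_G$ is nondecreasing;
\item $T_G(y)-G(y)\ \ge\ T_G(x)+(G(y)-G(x))-G(y)=T_G(x)-G(x)$, so $G_2=T_G-G$ is nondecreasing.
\end{itemize}
All quantities are finite since $\operatorname{Var}(G)<\infty$.

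\emph{Right-continuity of $T_G$.} This is the main step. Once it is established, $G_2$ is right-continuous as a difference of right-continuous functions. Fix a right-dense $x$ (at right-isolated points there is nothing to prove) and $\varepsilon>0$. By additivity, $T_G(y)-T_G(x)=\operatorname{Var}(G|_{[x,y]})$ for $y>x$, so we must show $\operatorname{Var}(G|_{[x,y]})<\varepsilon$ for some $y>x$; monotonicity then handles all smaller $y$.

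Suppose not. Then $\operatorname{Var}(G|_{[x,y]})\ge\varepsilon$ for every $y>x$. The plan is to extract disjoint blocks, each carrying variation about $\varepsilon/2$, which will contradict $\operatorname{Var}(G)<\infty$.
\begin{enumerate}
\item Start with $y_1=1_K$. Choose a division $x=d_0<d_1<\dots<d_m=y_1$ of $[x,y_1]$ whose sum exceeds $\varepsilon$.
\item Use right-continuity at $x$, together with right-density, to pick $y_2\in(x,d_1)$ with $|G(y_2)-G(x)|<\varepsilon/2$.
\item Replace $d_0$ by $y_2$. The first term $|G(d_1)-G(x)|$ changes by at most $|G(y_2)-G(x)|<\varepsilon/2$, so $\operatorname{Var}(G|_{[y_2,y_1]})>\varepsilon/2$.
\item Repeat the construction inside $[x,y_2]$, using the assumption $\operatorname{Var}(G|_{[x,y_2]})\ge\varepsilon$.
\end{enumerate}
This produces $y_1>y_2>\dots>x$ with $\operatorname{Var}(G|_{[y_{k+1},y_k]})>\varepsilon/2$ for every $k$. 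Summing over the adjacent intervals (again by additivity) gives $\operatorname{Var}(G)\ge k\varepsilon/2$ for all $k$, which is a contradiction. Only right-continuity and right-density at $x$ are used, so the fact that $K$ may be nonmetrizable is harmless: a sequence is chosen explicitly rather than extracted by a limiting argument.
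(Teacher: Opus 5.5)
Your proof is correct. The monotonicity part is the paper's own inequality $T_G(y)-T_G(x)\ge |G(y)-G(x)|\ge G(y)-G(x)$, but you prove right-continuity differently.

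The paper argues directly. It sets $\ell=\inf\{T_G(y):y>a\}$ and picks $v>a$ with $T_G<\ell+\varepsilon/2$ and $|G-G(a)|<\varepsilon/2$ on $(a,v)$. Then, for $y\in(a,v)$ and an arbitrary division of $[0_K,y]$, it splits at the first division point $x_r>a$:
\begin{itemize}
\item the part of the sum up to $a$ (with $a$ inserted) is at most $T_G(a)-|G(0_K)|$;
\item the crossing term contributes $|G(x_r)-G(a)|<\varepsilon/2$;
\item the tail is at most $T_G(y)-T_G(x_r)\le T_G(y)-\ell<\varepsilon/2$.
\end{itemize}
This gives $T_G(y)\le T_G(a)+\varepsilon$.

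You instead argue by contradiction, peeling off infinitely many adjacent blocks $[y_{k+1},y_k]$, each of variation $>\varepsilon/2$, against $\operatorname{Var}(G)<\infty$. The paper's version is a single estimate. It uses finite variation only implicitly, through the right limit $\ell$ of the bounded monotone function $T_G$, and it has to track where $a$ falls inside an arbitrary division. Yours is the classical textbook argument. It uses finite variation explicitly, needs no right limit, and, as you point out, needs no countable base at $x$, because the sequence $(y_k)$ is constructed rather than required to converge.

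One small slip to fix. From $\operatorname{Var}(G|_{[x,y]})\ge\varepsilon$ you cannot in general choose a division whose sum strictly exceeds $\varepsilon$. Choose one exceeding $3\varepsilon/4$ and take $y_2$ with $|G(y_2)-G(x)|<\varepsilon/4$; each block then still carries variation $>\varepsilon/2$.
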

\begin{proof}
It follows from the definition of total variation that $T_G$ is increasing. To prove that $T_G$ is right-continuous, let $a \in K$ be arbitrary. We may suppose that $a$ is not right-isolated as any function is right-continuous at a right-isolated point. Let $\ell=\inf\{T_G(y):y>a\}$. Since $T_G$ is nondecreasingand  and $G$ is right-continuous, there exists $v>t$ such that 
\[\ell\leq T_G(y)<\ell+\frac{\varepsilon}{2} \text{ and } |G(y)-f(a)|<\frac{\varepsilon}{2}\]
for all $y \in (a,v)$.
Given $y \in (a,v)$, let $D=\{x_0,x_1,\ldots,x_n\}$ be a division of $[0,y]$ and let $r$ be the smallest index such that $x_{r-1}\leq a<x_r$. Then we have:
\begin{align*}
\operatorname{Var}(G|_{[0_K,y]},D)&= \sum_{i=1}^{r-1}|G(x_i)-G(x_{i-1})| + |G(x_r)-G(x_{r-1})| + \sum_{j=r+1}^n|G(x_j)-G(x_{j-1})|\\
               &\leq (T_G(x_{r-1})-|G(0_K)|)+|G(x_r)-G(a)|+(T_G(a)-T_G(x_{r-1})) +\operatorname{Var}(G|_{[x_r,y]})\\
               &< T_G(a)-|G(0_K)|+\frac{\varepsilon}{2}+(T_G(y)-T_G(x_r))\\
               &<T_G(a)-|G(0_K)|+\frac{\varepsilon}{2}+(T_G(y)-\ell)\\
               &<T_G(a)-|G(0_K)|+\varepsilon.
\end{align*}
Taking the supremum over all divisions $D$, we conclude that  $T_G(y)\le T_G(a)+\varepsilon$
for all $y\in(a,v)$. We deduce that $|T_G(y)-T_G(a)|<\varepsilon$ for all $y \in (a,v)$ which proves that $T_G$ is right-continuous at $a$.

Next, by defining $G_1=T_G$ and $G_2=T_G-G$, it is clear tat $G_2$ is also right-continuous and $G=G_1-G_2$. Moreover, for every $x, y \in K$, if $y \leq x$, then $G(x)-G(y)\leq T_G(x)-T_G(y)$. Therefore
\[G_2(y)=T_G(y)-G(y)\leq T_G(x)-G(x)=G_2(x),\]
which proves that $G_2$ is nondecreasing.
\end{proof}

\begin{remark}\label{Rem:MeasureInterval}
In order to establish the next result, we need a remark concerning the representation of open intervals in a compact line $K$. Here, by an open interval we mean an interval in the topological sense, that is, an interval which is also an open subset of $K$.

In general, a given open interval in $K$ may admit more than one representation. For instance, if $b$ is right-isolated and $a$ is left-isolated, then the sets $(a^-,b^+)$ and $[a,b]$ determine the same open subset of $K$. Although these representations differ, they yield the same value under the measure $\mu_G$, whenever $G\in \mathrm{NBV}(K)$. Indeed,
\[
\mu_G((a^-,b^+))=L_G(b^+)-G(a^-)=G(b)-G(a^-)=G(b)-L_G(a)=\mu_G([a,b]).
\]

However, in the arguments that follow, it will be important to fix a convenient representation for intervals, in order to avoid overlaps at the endpoints of consecutive disjoint intervals. For example, let $a<b<c$ be points of $K$. If $b$ is right-isolated, then $(a,b^+)$ and $(b,c)$ are disjoint, but the right endpoint of the former exceeds the left endpoint of the latter. Such a situation may compromise estimates involving the variation in Theorem~\ref{Thm:VariationMeasure} below.

For this reason, it will be convenient to fix a specific representation for open intervals in $K$.

Let $I$ be a non-empty open interval, represented with endpoints $a\le b$. We define a new representation of $I$, called the canonical representation, as follows:
\begin{itemize}
\item on the left:
\begin{itemize}
\item if $a\notin I$, we write $(a,\cdot)$;
\item if $a\in I$ and $a>0_K$, we write $(a^-,\cdot)$;
\item if $a=0_K$, we write $[0_K,\cdot)$;
\end{itemize}

\item on the right:
\begin{itemize}
\item if $b\in I$, we write $(\cdot,b]$;
\item if $b\notin I$ and $b$ is left-isolated, we write $(\cdot,b^-]$;
\item if $b\notin I$ and $b$ is not left-isolated, we write $(\cdot,b)$.
\end{itemize}
\end{itemize}

With this representation, no overlap occurs at the endpoints of disjoint open intervals. Indeed, let $I_1<I_2$ be two disjoint open intervals represented in canonical form, with endpoints $a_1\le b_1$ and $a_2\le b_2$, respectively. If $a_2<b_1$, then the rules above imply that $a_2$ is right-isolated and $a_2^+\in I_2$, while $b_1$ is left-isolated and $b_1\in I_1$. This implies that $a_2^+\in I_1\cap I_2$, which is a contradiction.
\end{remark}

\begin{prop}\label{Prop:OverlapNoWay}
Let $K$ be a compact line and let $G\in \mathrm{NBV}(K)$. Let $I$ be an open interval of $K$, and let $a\le b$ be its endpoints in the canonical representation. Then
\[
|\mu_G(I)|\le \operatorname{Var}(G|_{[a,b]}).
\]
\end{prop}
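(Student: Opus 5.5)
The plan is to express $\mu_G(I)$ as a difference of values of $G$ and $L_G$ at the canonical endpoints, and then bound that difference by one or two increments of $G$ taken inside $[a,b]$.

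The canonical representation allows three left forms, $(a,\cdot)$, $(a^-,\cdot)$ with $a$ the relabelled point, or $[0_K,\cdot)$, and two right forms, $(\cdot,b]$ or $(\cdot,b)$ with $b$ left-dense. Here $a,b$ denote the endpoints as written canonically. In every case the left form is either $(a,\cdot)$ or $[0_K,\cdot)$, so it suffices to treat:
\[
\mu_G((a,b])=G(b)-G(a),\quad \mu_G((a,b))=L_G(b)-G(a),
\]
\[
\mu_G([0_K,b])=G(b),\quad \mu_G([0_K,b))=L_G(b).
\]
These follow from $\mu_G([0_K,x])=G(x)$ and $\mu_G([0_K,x))=L_G(x)$ by subtraction.

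When the right endpoint is closed, the bound is immediate. For $(a,b]$, the division $\{a,b\}$ of $[a,b]$ gives $|G(b)-G(a)|\le \operatorname{Var}(G|_{[a,b]})$. The case $[0_K,b]$ with $a=0_K$ looks like a problem because $|G(b)|$ need not be bounded by $\operatorname{Var}(G|_{[0_K,b]})$. I expect the resolution is that the intended convention includes the atom $G(0_K)$ at $0_K$, as in $T_G$. Still, I would first check whether $0_K$ is isolated, in which case the canonical form is $(0_K^-,\cdot)$, which is not legitimate. So I would treat $[0_K,\cdot)$ with the measure of $\{0_K\}$ equal to $G(0_K)$ and compare against $|G(0_K)|+\operatorname{Var}$, flagging this as the one point where the statement as written needs the convention $\operatorname{Var}(G|_{[0_K,b]})$ together with $|G(0_K)|$ (i.e.\ $T_G$). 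Subject to that caveat, the $0_K$ cases reduce to the same estimates with $G(a)$ replaced by $0$.

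When the right endpoint is open at a left-dense $b$, I would use the definition $L_G(b)=\lim_{y\nearrow b}G(y)$. For any $y\in(a,b)$ the division $\{a,y,b\}$ gives
\[
|G(y)-G(a)|\le |G(y)-G(a)|+|G(b)-G(y)|\le \operatorname{Var}(G|_{[a,b]}).
\]
Letting $y\nearrow b$ yields $|L_G(b)-G(a)|\le \operatorname{Var}(G|_{[a,b]})$, since $b$ left-dense and $a<b$ guarantee such $y$ exist. If $a=b$, the interval is empty or a singleton and the bound is trivial.

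The main obstacle is the bookkeeping of the canonical forms, in particular making sure each left form really reduces to $(a,\cdot)$ with $G(a)$ as the subtracted term, and handling the $0_K$ endpoint consistently with the normalization $\mu_G(\{0_K\})=G(0_K)$. The rest consists of two-point or three-point division estimates plus a limit.
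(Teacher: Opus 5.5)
Your argument is the paper's argument. You compute $\mu_G(I)$ from $\mu_G([0_K,x])=G(x)$ and $\mu_G([0_K,x))=L_G(x)$, and use a two-point division when the right end is closed. For $(a,b)$ with $b$ left-dense, you bound $|G(y)-G(a)|$ via the division $\{a,y,b\}$ and let $y\nearrow b$. The paper does the same with a point $x_\varepsilon$ and leaves implicit the requirement $x_\varepsilon>a$, which you state explicitly.

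Your caveat about $0_K$ is correct. It is a defect of the statement and of the paper's proof, not of your plan.

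\begin{itemize}
\item \textbf{Counterexample.} Take $K=[0,1]$ and $G\equiv 1$. Then $G\in\mathrm{NBV}(K)$ and $\mu_G=\delta_0$. For $I=K$ (canonical form $[0_K,1_K]$), or for $I=[0,1/2)$, we get $|\mu_G(I)|=1>0=\operatorname{Var}(G|_{[a,b]})$.
\item \textbf{Where the paper goes wrong.} It calls the case $[0_K,b]$ immediate by writing $|\mu_G(I)|=|G(b)-G(a)|$. This is false, since $\mu_G([0_K,b])=G(b)$. The case $[0_K,b)$ is never treated at all.
\item \textbf{Your correction works.} For $0_K\in I$, the bound $|\mu_G(I)|\le |G(0_K)|+\operatorname{Var}(G|_{[0_K,b]})=T_G(b)$ follows from $|G(y)|\le |G(0_K)|+|G(y)-G(0_K)|$. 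For $L_G(b)$ you add the same limit argument as before.
\item \textbf{Consistency with the later use.} The corrected bound matches the true target in Theorem~\ref{Thm:VariationMeasure}, namely $\mu_{T_G}([0_K,x])=T_G(x)$. The paper writes $T_G(x)-T_G(0_K)$ there, which drops the same atom $|G(0_K)|$.
\end{itemize}

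Two small points of bookkeeping. First, there is no form $(0_K^-,\cdot)$ to guard against. The rule $(a^-,\cdot)$ applies only when $a>0_K$, so whenever $0_K\in I$ the canonical form is $[0_K,\cdot)$, whether or not $0_K$ is isolated. Second, your remark that $a=b$ is trivial holds only under your corrected convention. When $0_K$ is isolated, $[0_K,0_K]=\{0_K\}$ is a nonempty open interval with $|\mu_G(I)|=|G(0_K)|$, while $\operatorname{Var}(G|_{[0_K,0_K]})=0$.
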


\begin{proof}
If $I$ is either $(a,b]$ or $[0_K,b]$, then the result is immediate, since $|\mu_G(I)|=|G(b)-G(a)|$. It remains to consider the case where $I$ has the canonical representation $(a,b)$ with $b$ left-dense. In this case, $|\mu_G(I)|=|L_G(b)-G(a)|$.

For every $\varepsilon>0$, there exists $x_\varepsilon<b$ such that $L_G(b)-\varepsilon<G(x_\varepsilon)\le L_G(b)$.
Hence
\begin{align*}
|G(a)-L_G(b)|
&\le |G(a)-G(x_\varepsilon)|+|G(x_\varepsilon)-L_G(b)|\\
&< |G(a)-G(x_\varepsilon)|+\varepsilon\le \operatorname{Var}(G|_{[a,b]})+\varepsilon.
\end{align*}
Letting $\varepsilon\to 0$, we obtain
\[
|\mu_G(I)|=|G(a)-L_G(b)|\le \operatorname{Var}(G|_{[a,b]}).
\]
\end{proof}

\begin{theorem}\label{Thm:VariationMeasure}
Let $K$ be a compact line and let $G\in \mathrm{NBV}(K)$. Then $|\mu_G|=\mu_{T_G}$.
\end{theorem}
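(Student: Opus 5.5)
We prove $|\mu_G|=\mu_{T_G}$ by establishing both inequalities between the two positive Radon measures. Since both are Radon, it suffices to compare them on open sets, or on the intervals $[0_K,x]$: a positive Radon measure on a compact line is determined by its values on these intervals, via the isomorphism $\mathrm{NBV}(K)\cong\mathcal M(K)$. So the target is
\[
|\mu_G|([0_K,x])=T_G(x)\qquad\text{for all }x\in K.
\]
The right-hand side is $\mu_{T_G}([0_K,x])$, because $T_G$ is nondecreasing and right-continuous by Proposition~\ref{Prop:VariationDoneRight}.

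\emph{Step 1: $|\mu_G|\le \mu_{T_G}$.} By Proposition~\ref{Prop:VariationDoneRight}, $G=T_G-(T_G-G)$ is a difference of nondecreasing right-continuous functions. Linearity of $G\mapsto\mu_G$ then gives
\[
\mu_G=\mu_{T_G}-\mu_{T_G-G},
\]
a difference of two positive measures. Hence $\mu_G\le\mu_{T_G}$ and $-\mu_G\le \mu_{T_G-G}$. To get the sharper bound $|\mu_G|\le\mu_{T_G}$, I would argue directly. For a Borel set $E$ and a finite Borel partition $E=E_1\cup\dots\cup E_m$, we must show $\sum|\mu_G(E_j)|\le\mu_{T_G}(E)$. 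By regularity of both $|\mu_G|$ and $\mu_{T_G}$, it suffices to do this when the $E_j$ are open sets; each open set in a compact line is a disjoint union of open intervals, and we may truncate to finitely many. So the claim reduces to
\[
\sum_i|\mu_G(I_i)|\le\mu_{T_G}(U)
\]
for finitely many disjoint open intervals $I_i$ contained in an open set $U$. Write each $I_i$ in canonical form, as in Remark~\ref{Rem:MeasureInterval}. Proposition~\ref{Prop:OverlapNoWay} gives $|\mu_G(I_i)|\le \operatorname{Var}(G|_{[a_i,b_i]})$. Because canonical representations do not overlap, the closed intervals $[a_i,b_i]$ have disjoint interiors and are ordered, so additivity of variation yields
\[
\sum_i \operatorname{Var}(G|_{[a_i,b_i]})\le \sum_i\bigl(T_G(b_i)-T_G(a_i)\bigr)=\sum_i\mu_{T_G}((a_i,b_i]).
\]
It remains to compare $(a_i,b_i]$ with $I_i$. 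In the case $(a_i,b_i)$ with $b_i$ left-dense, use $\operatorname{Var}(G|_{[a,b]})$ restricted to points below $b$, plus continuity from the left: the half-open jump at $b$ should be discarded, which needs care. The case $[0_K,\cdot)$ requires the extra term $|G(0_K)|$, which is accounted for in $T_G(0_K)$.

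\emph{Step 2: $\mu_{T_G}\le|\mu_G|$.} Here it is enough to show $T_G(x)\le|\mu_G|([0_K,x])$. First, $|G(0_K)|=|\mu_G(\{0_K\})|\le|\mu_G|(\{0_K\})$. Next, for any division $0_K=x_0<\dots<x_n=x$ of $[0_K,x]$, the sets $\{0_K\}$ and $(x_{i-1},x_i]$ are disjoint Borel sets with $\mu_G((x_{i-1},x_i])=G(x_i)-G(x_{i-1})$. Therefore
\[
|G(0_K)|+\operatorname{Var}(G|_{[0_K,x]},D)\le|\mu_G|([0_K,x]),
\]
and taking the supremum over $D$ gives $T_G(x)\le |\mu_G|([0_K,x])$. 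Combining Step 1 on the sets $[0_K,x]$ (which are closed, so outer regularity approximates them by open sets) with Step 2 yields equality on all $[0_K,x]$, and hence $|\mu_G|=\mu_{T_G}$.

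The main obstacle is Step 1, specifically matching $\mu_{T_G}$ of an open interval with the variation over its canonical closed hull. When the right endpoint $b$ is left-dense and not in $I$, $\operatorname{Var}(G|_{[a,b]})$ may include the jump at $b$, while $\mu_{T_G}(I)=L_{T_G}(b)-T_G(a)$ does not. I would fix this by approximating from inside: choose $x_\varepsilon<b$ as in the proof of Proposition~\ref{Prop:OverlapNoWay} and bound $|\mu_G(I)|$ by $\operatorname{Var}(G|_{[a,x_\varepsilon]})+\varepsilon\le T_G(x_\varepsilon)-T_G(a)+\varepsilon\le\mu_{T_G}(I)+\varepsilon$.
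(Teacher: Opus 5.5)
Your proof is correct and follows essentially the paper's route. You reduce to the initial segments $[0_K,x]$ via $\mathrm{NBV}(K)\cong\mathcal M(K)$, obtain $T_G(x)\le|\mu_G|([0_K,x])$ from divisions, and get the reverse inequality by regularity, splitting open sets into finitely many disjoint open intervals and applying Proposition~\ref{Prop:OverlapNoWay}. The only variation is that you bound each interval by $\mu_{T_G}(I)$ and sum inside an outer-regular open set $U\supseteq E$, instead of telescoping $T_G$ over canonical endpoints; this also correctly accounts for the atom $|G(0_K)|=\mu_{T_G}(\{0_K\})$.
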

\begin{proof}
According to \cite[Theorem 2.4.11]{Ronchim}, we will have the thesis by proving that for every $x\in K$, we have $|\mu_G|([0_K,x])=\mu_{T_G}([0_K,x])$.

On the one hand, by the definition of the total variation measure, we have, for every $x\in K$,
\[
\mu_{T_G}([0_K,x])=T_G(x)-T_G(0_K)=\operatorname{Var}(G|_{[0_K,x]})\leq |\mu_G|([0_K,x]).
\]

For the opposite inequality, let $x\in K$, $\varepsilon>0$ be arbitrary and let $\{E_1,\ldots,E_n\}$ be a finite Borel partition of $[0_K,x]$ such that 
\[
|\mu_G|([0_K,x])<\sum_{j=1}^n|\mu_G(E_j)|+\varepsilon.
\]

Since $|\mu_G|$ is a Radon measure, by regularity, for each $j$ we may choose a compact set 
$K_j\subseteq E_j$ such that 
\[
|\mu_G|(E_j\setminus K_j)<\frac{\varepsilon}{n}.
\]
Then the sets $K_1,\dots,K_n$ are pairwise disjoint and
\[
\sum_{j=1}^n |\mu_G(E_j)|
\le
\sum_{j=1}^n |\mu_G(K_j)|+\varepsilon.
\]

By the normality of $K$ and again by the regularity of $|\mu_G|$, we may fix pairwise disjoint open sets $V_1,\dots,V_n$ such that $V_1<\cdots<V_n$, with $K_j\subseteq V_j$ and 
\[
|\mu_G|(V_j\setminus K_j)<\frac{\varepsilon}{n}
\]
for all $j$.

Since $K$ is a compact line, each open set $V_j$ can be written as a union of pairwise disjoint open intervals. Then, by compactness of $K_j$, there exist pairwise disjoint open intervals $I_1^j<\cdots<I_{m_j}^j$ such that 
\[
K_j\subset \bigcup_{i=1}^{m_j} I_{i}^j\subset V_j.
\]
Writing $V_j^\prime=\bigcup_{i=1}^{m_j} I_i^j$, we obtain
\[
|\mu_G|(V_j^\prime\setminus K_j)\leq |\mu_G|(V_j\setminus K_j)<\frac{\varepsilon}{n}.
\]
Thus,
\[
\sum_{j=1}^n |\mu_G(E_j)|\le\sum_{j=1}^n |\mu_G(V_j^\prime)|+2\varepsilon.
\]

Now, for each $j$, since $V_j^\prime=\bigcup_{i=1}^{m_j} I_i^j$, by letting $a_i^j\leq b_i^j$ denote the endpoints of the interval $I_i^j$ represented in canonical form as in Remark \ref{Rem:MeasureInterval}, and using Proposition \ref{Prop:OverlapNoWay}, we obtain
\[
|\mu_G(V_j^\prime)|\le\sum_{i=1}^{m_j} |\mu_G(I_i^j)|\le\sum_{i=1}^{m_j} \operatorname{Var}(G|_{[a_i^j,b_i^j]})\leq \sum_{i=1}^{m_j}(T_G(b_i^j)-T_G(a_i^j)).
\]

Now, since the intervals are represented in canonical form, we have that $T_G(a_i^j)\geq T_G(b_{i-1}^j)$ for each $2\leq i \leq m_j$, and hence
\[
|\mu_G(V_j^\prime)|\le T_G(b_{m_j}^j)-T_G(a_1^j).
\]

By the same reasoning, we have $T_G(a_1^j)\geq T_G(b_{m_{j-1}}^{j-1})$ for each $2\leq j \leq n$, which gives
\[
\sum_{j=1}^n |\mu_G(V_j^\prime)|\le\sum_{j=1}^n(T_G(b_{m_j}^j)-T_G(a_1^j))\le T_G(b_{m_n}^n)-T_G(a_1^1)\leq T_G(x)-T_G(0_K)=\mu_{T_G}([0_K,x]).
\]

Therefore,
\[
|\mu_G|([0_K,x])\le\mu_{T_G}([0_K,x])+3\varepsilon.
\]
Letting $\varepsilon\to 0$, we obtain the desired inequality.
\end{proof}

\begin{prop}\label{Prop:TIntegrableImpliesGIntegrable} 
Let $K$ be a compact line and let $G\in \mathrm{NBV}(K)$. If $f$ and $|f|$ are both $T_G$-integrable, then they are also $(T_G-G)$-integrable, and consequently $G$-integrable.
\end{prop}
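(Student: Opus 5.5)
The strategy is to pass through Lebesgue integration with respect to the positive measures $\mu_{T_G}$ and $\mu_{T_G-G}$, using the nondecreasing characterization in Corollary~\ref{Cor:CharacterizationIncreasingMeasurability}, and then come back to the $G$-integral by linearity in the integrator.

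\textbf{Step 1.} Since $T_G$ is nondecreasing and right-continuous by Proposition~\ref{Prop:VariationDoneRight}, and both $f$ and $|f|$ are $T_G$-integrable, Corollary~\ref{Cor:CharacterizationIncreasingMeasurability} gives $f\in L_1(\mu_{T_G})$. Theorem~\ref{Thm:VariationMeasure} identifies $\mu_{T_G}$ with $|\mu_G|$, so $f\in L_1(|\mu_G|)$.

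\textbf{Step 2.} Set $G_2=T_G-G$. By Proposition~\ref{Prop:VariationDoneRight} it is nondecreasing and right-continuous, hence it lies in $\mathrm{NBV}(K)$. By the linearity of the isomorphism $G\mapsto\mu_G$, we have $\mu_{G_2}=\mu_{T_G}-\mu_G=|\mu_G|-\mu_G$. This is a positive measure with $\mu_{G_2}\le 2|\mu_G|$. Moreover, $\mu_{T_G}$-measurable sets are $\mu_{G_2}$-measurable, because $\mu_{G_2}$-null sets contain all $|\mu_G|$-null sets. Since $f$ is $|\mu_G|$-measurable, it is therefore $\mu_{G_2}$-measurable, and
\[
\int_K|f|\,d\mu_{G_2}\le 2\int_K|f|\,d|\mu_G|<\infty .
\]
So $f\in L_1(\mu_{G_2})$, and likewise $|f|\in L_1(\mu_{G_2})$.

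\textbf{Step 3.} Apply Corollary~\ref{Cor:CharacterizationIncreasingMeasurability} in the direction (1)$\Rightarrow$(2) to $G_2$. This yields that $f$ and $|f|$ are $G_2$-integrable. Since $G=T_G-G_2$ and the $G$-integral is linear in the integrator (take a common refinement of gauges, i.e.\ the intersection of the two gauges, and use $S(f,T_G-G_2,P)=S(f,T_G,P)-S(f,G_2,P)$), it follows that $f$ and $|f|$ are $G$-integrable.

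\textbf{Main obstacle.} The delicate point is Step 2, specifically the measurability transfer. We must make sure that ``$\mu_{T_G}$-measurable'' (completion of the Borel $\sigma$-algebra) implies ``$\mu_{G_2}$-measurable''. This needs absolute continuity of $\mu_{G_2}$ with respect to $\mu_{T_G}$, i.e.\ that the outer null sets are comparable. That in turn follows from $0\le\mu_{G_2}\le 2\mu_{T_G}$ on Borel sets together with the regularity of the outer measures.

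One should also check that Corollary~\ref{Cor:CharacterizationIncreasingMeasurability} requires no hypothesis beyond nondecreasing right-continuity, since its proof normalizes by adding a constant. Adding a constant to the integrator only shifts $G(0_K)$, and this is consistent with $\mu$ assigning the atom $G(0_K)$ at $0_K$.
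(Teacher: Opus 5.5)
Your proof is correct and follows essentially the same route as the paper: you apply Corollary~\ref{Cor:CharacterizationIncreasingMeasurability} to $T_G$, dominate $\mu_{T_G-G}$ by a multiple of $\mu_{T_G}=|\mu_G|$ to get $f\in L_1(\mu_{T_G-G})$, apply the corollary again to $T_G-G$, and finish by linearity in the integrator (the paper cites \cite[Proposition~3.11]{CanKau} for this last step). Your comparison $0\le\mu_{T_G-G}=|\mu_G|-\mu_G\le 2|\mu_G|$ on all Borel sets is in fact the right one; the paper's bound $\mu_{T_G-G}(I)\le\mu_{T_G}(I)$ with constant $1$ fails, e.g.\ for $G(x)=-x$ on $[0,1]$, where $\mu_{T_G-G}=2\mu_{T_G}$. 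Because you get the bound directly from linearity of $G\mapsto\mu_G$, you also avoid the paper's appeal to intervals generating the Borel $\sigma$-algebra.
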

\begin{proof}
By Proposition~\ref{Prop:VariationDoneRight}, by defining $G_1=T_G$ and $G_2=T_G-G$, then $G_1$ and $G_2$ are nondecreasing and right-continuous and $G=G_1-G_2$.

Since $f$ and $|f|$ are both $T_G$-integrable. By Corollary~\ref{Cor:CharacterizationIncreasingMeasurability}, it follows that $f\in L^1(\mu_{T_G})$.

From Theorem \ref{Thm:VariationMeasure}, we know that $|\mu_G|=\mu_{T_G}$. Therefore, for each interval $I\subset K$,
\[\mu_{G_2}(I)\le |\mu_G|(I)=\mu_{T_G}(I).\]
Since the intervals generate the Borel $\sigma$-algebra of $K$ it follows that $\mu_{G_2}(A)\le \mu_{T_G}(A)$ for every Borel set $A\subset K$. Since $\mu_{G_2}$ and $\mu_{T_G}$ are positive measures, we obtain that if $f\in L^1(\mu_{T_G})$, then $f\in L^1(\mu_{G_2})$.

Applying Corollary~\ref{Cor:CharacterizationIncreasingMeasurability} to $G_2$ allows us to conclude that both $f$ and $|f|$ are $G_2$-integrable. Finally, since $G=G_1-G_2$,  \cite[Proposition~3.11]{CanKau} implies that both $f$ and $|f|$ are $G$-integrable.
\end{proof}

The next example shows that the converse of Proposition~\ref{Prop:TIntegrableImpliesGIntegrable} fails in general. Indeed, if $f$ is $G$-integrable, where $G\in \mathrm{NBV}(K)$ is not monotone and admits a decomposition $G=G_1-G_2$ as in Proposition~\ref{Prop:VariationDoneRight}, with $G_1$ and $G_2$ non-null, it may happen that $f$ is neither $G_1$-integrable nor $G_2$-integrable.

\begin{example}
There exist a compact line $K$, a function $G\in\mathrm{NBV}(K)$, and a function $f:K\to\mathbb R$ such that $f$ is $G$-integrable but not $T_G$-integrable. Indeed, let $K=[0,1]$ and define $G:[0,1]\to \mathbb{R}$ by
\[
G(x)=
\begin{cases}
\displaystyle \int_0^x t\sin\!\left(\frac1{t^2}\right)\,dt, & \text{ if }x>0,\\[1.2ex]
0, & \text{ if }x=0.
\end{cases}
\]
Since 
\[\int_0^1 \left|x\sin\!\left(\frac1{x^2}\right)\right|\,dx<\infty.\]
we obtain from \cite[Theorem 4.2]{CanKau} that $G$ is an element of $\mathrm{NBV}(K)$ and 
\[T_G(x)=\operatorname{Var}(G|_{[0,x]})+|G(0)|=\int_0^x \left|t\sin\!\left(\frac1{t^2}\right)\right|\,dt.\]

Let $f:[0,1]\to \mathbb{R}$ be given by the formula
\[
f(x)=
\begin{cases}
\displaystyle \frac1{x^2}, & x>0,\\[1ex]
0, & x=0.
\end{cases}
\]

We claim that $f$ is $G$-integrable but not $T_G$-integrable. Indeed, let $y\in(0,1)$ be arbitrary. Since $f$ is continuous on $[y,1]$, it follows that $f$ is $G$-integrable on $[y,1]$, and
\[
\int_K f|_{(y,1]}\,dG=\int_y^1 \frac1{x^2}\,x\sin\!\left(\frac1{x^2}\right)\,dx=\int_y^1 \frac{\sin(1/x^2)}{x}\,dx=\frac12\int_1^{1/y^2}\frac{\sin u}{u}\,du.
\]
Hence 
\[\lim_{y\searrow 0}\int_K f|_{(y,1]}\,dG=\frac12\int_1^\infty \frac{\sin u}{u}\,du\in \mathbb{R}\]
and we deduce from Hake theorem \ref{Thm:Hake} that $f$ is $G$-integrable.

On the other hand, for each $y \in (0,1)$ holds
\[
\int_K f|_{(y,1]}\,dT_G=\int_y^1 \frac1{x^2}\left|x\sin\!\left(\frac1{x^2}\right)\right|\,dx=\int_y^1 \frac{|\sin(1/x^2)|}{x}\,dx=
\frac12\int_1^{1/y^2}\frac{|\sin u|}{u}\,du.
\]
Hence 
\[\lim_{y\searrow 0}\int_K f|_{(y,1]}\,dT_G=\frac12\int_1^\infty \left|\frac{\sin u}{u}\right|\,du=+\infty\]
and we deduce from the converse of Hake theorem \ref{Thm:HakeConverse} that $f$ is not $T_G$-integrable.
\end{example}

The following substitution lemma provides a useful tool for transferring results from the case of a nondecreasing integrator to the more general setting of integrators in $\mathrm{NBV}(K)$. Its main limitation is the boundedness assumption on the integrand $f$. At present, we do not know whether a corresponding general result can be obtained without this hypothesis.

\begin{lemma}[Substitution]\label{Lem:Substitution}
Let $K$ be a compact line, let $T:K\to\mathbb R$ be a nondecreasing right-continuous function, and let $h:K\to\mathbb R$ be a $T$-integrable function. Define
\[
H(x)=\int_{0_K}^x h\,dT,\qquad x\in K.
\]
If $f:K\to\mathbb R$ is bounded, then $f$ is $H$-integrable if and only if $fh$ is $T$-integrable. In this case,
\[
\int_K f\,dH=\int_K fh\,dT.
\]
\end{lemma}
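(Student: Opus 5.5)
The plan is to compare Riemann sums directly, using the Saks--Henstock Lemma \cite[Lemma 3.18, Corollary 3.19]{CanKau} for $h$ with respect to $T$ to replace each increment $H(x_i)-H(x_{i-1})$ by the local Riemann term $h(t_i)(T(x_i)-T(x_{i-1}))$, corrected by the point-mass term at the left endpoint. Let $M=\sup_K|f|$. For a partition $P=\{([x_{i-1},x_i],t_i)\}$ we have
\[
S(f,H,P)-S(fh,T,P)=f(0_K)\bigl(H(0_K)-h(0_K)T(0_K)\bigr)+\sum_{i=1}^n f(t_i)\Bigl(H(x_i)-H(x_{i-1})-h(t_i)\bigl(T(x_i)-T(x_{i-1})\bigr)\Bigr).
\]
By definition of the integral (with the $f(0_K)G(0_K)$ convention), $H(0_K)=h(0_K)T(0_K)$, so the first term vanishes. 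By \cite[Theorem 3.16]{CanKau} (additivity), $H(x_i)-H(x_{i-1})=\int_{x_{i-1}}^{x_i}h\,dT-h(x_{i-1})T(x_{i-1})$, which is exactly the quantity appearing in the Saks--Henstock estimate already used in the proof of Theorem~\ref{Thm:differentiation}.

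The first step is to fix $\varepsilon>0$ and an $\varepsilon$-gauge $\delta_0$ for $h$ with respect to $T$. The Saks--Henstock Lemma in its absolute-value form then gives, for every $\delta_0$-fine partition,
\[
\sum_{i=1}^n\Bigl|H(x_i)-H(x_{i-1})-h(t_i)\bigl(T(x_i)-T(x_{i-1})\bigr)\Bigr|\le 4\varepsilon .
\]
Multiplying by $|f(t_i)|\le M$ yields $|S(f,H,P)-S(fh,T,P)|\le 4M\varepsilon$ for all $\delta_0$-fine $P$.

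With this estimate both implications follow by intersecting gauges. If $fh$ is $T$-integrable with integral $I$, take $\delta=\delta_0\cap\delta_1$, where $\delta_1$ is an $\varepsilon$-gauge for $fh$. Then $|S(f,H,P)-I|\le(4M+1)\varepsilon$, so $f$ is $H$-integrable with $\int_K f\,dH=I$. Conversely, if $f$ is $H$-integrable, the same argument with an $\varepsilon$-gauge for $f$ with respect to $H$ gives the reverse implication and the same equality of integrals. No properties of $H$ beyond its definition are needed, though \cite[Theorem 4.1]{CanKau} confirms that $H$ is amenable, which is consistent with the framework.

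The main obstacle is bookkeeping rather than a new idea. One must check that the Saks--Henstock bound in \cite[Corollary 3.19]{CanKau} applies to full partitions, including the first interval and the $0_K$ term, and that the correction $h(x_{i-1})T(x_{i-1})$ matches precisely the difference $\int_{x_{i-1}}^{x_i}h\,dT-(H(x_i)-H(x_{i-1}))$. Degenerate intervals with $x_{i-1}=x_i$ need separate care, because the $G$-integral on $[x,x]$ equals $h(x)T(x)$ under the paper's convention, so that term also cancels. If that identity turns out to be off by point-mass terms $h(x_{i-1})(T(x_{i-1})-L_T(x_{i-1}))$, I would instead use \cite[Proposition 3.15]{CanKau} to rewrite the increments as integrals over half-open intervals $(x_{i-1},x_i]$, and apply the Saks--Henstock estimate in that form.
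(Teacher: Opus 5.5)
Your proposal is correct and is essentially the paper's own proof. Both arguments rewrite $H(x_i)-H(x_{i-1})$ as $\int_{x_{i-1}}^{x_i}h\,dT-h(x_{i-1})T(x_{i-1})$ via \cite[Theorem 3.16]{CanKau}. Both then bound $|S(f,H,P)-S(fh,T,P)|$ by $M=\sup_K|f|$ times the Saks--Henstock sum \cite[Corollary 3.19]{CanKau} for a gauge of $h$, and intersect gauges to get each implication and the equality of integrals. Your correction term $h(x_{i-1})T(x_{i-1})$ is the right one; the paper's first display has the typo $f(x_{i-1})T(x_{i-1})$.
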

\begin{proof}
Let $\varepsilon>0$ be arbitrary and let $\sup_{x\in K}|f(x)|\le M$. If $P=\{([x_{i-1},x_i],t_i):i=1,\ldots,n\}$ is any tagged partition of $K$, recalling \cite[Theorem 3.16]{CanKau} we can write
\begin{align*}S(f,H,P)&=f(0_K)H(0_K)+\sum_{i=1}^n
f(t_i)\bigl(H(x_i)-H(x_{i-1})\bigr)\\
&=
f(0_K)h(0_K)T(0_K)+\sum_{i=1}^nf(t_i)\left(\int_{x_{i-1}}^{x_i} h\,dT-f(x_{i-1})T(x_{i-1})\right).
\end{align*}
and
\[
S(fh,T,P)=
f(0_K)h(0_K)T(0_K)+\sum_{i=1}^nf(t_i)h(t_i)\bigl(T(x_i)-T(x_{i-1})\bigr).
\]
Therefore
\begin{align*}
|S(f,H,P)-S(fh,T,P)|
&=
\left|
\sum_{i=1}^n
f(t_i)\bigl(h(t_i)(T(x_i)-T(x_{i-1})+h(x_{i-1})T(x_{i-1}) -\int_{x_{i-1}}^{x_i} h\,dT\bigr)\right|\\
&\leq 
M\sum_{i=1}^n\left|\bigl(h(t_i)(T(x_i)-T(x_{i-1})+h(x_{i-1})T(x_{i-1}) -\int_{x_{i-1}}^{x_i} h\,dT\bigr)\right|
\end{align*}

Since $h$ is $T$-integrable, we may fix an $\frac{\varepsilon}{8(M+1)}$-gauge $\delta_h$ for $h$. Then for any $\delta_h$-fine 
partition $P=\{([x_{i-1},x_i],t_i):i=1,\ldots,n\}$ of $K$, by employing the Saks--Henstock Lemma \cite[Corollary 3.19]{CanKau} for the tagged system obtained from $P$, we obtain
\[
\sum_{i=1}^n\left|h(t_i)\bigl(T(x_i)-T(x_{i-1})+h(x_{i-1})T(x_{i-1}) -\int_{x_{i-1}}^{x_i} h\,dT\bigr)\right|<\frac{\varepsilon}{2(M+1)}.
\]
and we deduce that for each $\delta_h$-fine partition $P$ of $K$ holds
\[|S(f,H,P)-S(fh,T,P)|\leq M\frac{\varepsilon}{2(M+1)}<\frac{\varepsilon}{2}.\]

If $f$ is $H$-integrable, we fix a $\frac{\epsilon}{2}$-gauge $\delta_f$ for $K$ and let $P$ be a $\delta_f\cap \delta_h$-fine partition of $K$. We have
\begin{align*}
\left|S(fh,T,P)-\int_K f\,dH\right|
&\le |S(fh,T,P)-S(f,H,P)|+\left|S(f,H,P)-\int_K f\,dH\right|\\
&<\frac{\varepsilon}{2}+\frac{\varepsilon}{8(M+1)}
<\varepsilon.
\end{align*}

Hence $fh$ is $T$-integrable and
\[
\int_K fh\,dT=\int_K f\,dH.
\]
On the other hand, if $fh$ is $T$-integrable, we let we fix a $\frac{\epsilon}{2}$-gauge $\delta_{fh}$ for $fh$ and let $P$ be a $\delta_{fh}\cap \delta_h$-fine partition of $K$. It follows that
\begin{align*}
\left|S(f,H,P)-\int_K fh\,dT\right|
&\le |S(f,H,P)-S(fh,T,P)|+\left|S(fh,T,P)-\int_K fh\,dT\right|\\
&<\frac{\varepsilon}{2}+\frac{\varepsilon}{8(M+1)}
<\varepsilon.
\end{align*}
Hence $f$ is $H$-integrable and
\[
\int_K f\,dH=\int_K fh\,dT.
\]
This concludes the proof.
\end{proof}

We are now in a position to establish another one of the main results of this paper.

\begin{proof}[Proof of Theorem \ref{Thm:Main2}]
According to Theorem~\ref{Thm:VariationMeasure}, we have $|\mu_G|=\mu_{T_G}$. By the Radon-Nikod\'ym theorem \cite[\S 31, Theorem B]{Halmos}, there exists a function $h\in L^1(\mu_{T_G})$ such that
\[
\mu_G(A)=\int_A h\,d\mu_{T_G}
\]
for every Borel set $A\subset K$. Moreover, since $|\mu_G|=\mu_{T_G}$, it follows that $|h|=1$ $\mu_{T_G}$-almost everywhere.

Define $H:K\to\mathbb R$ by
\[
H(x)=\int_{0_K}^x h\,dT_G.
\]

Since $h\in L^1(\mu_{T_G})$ and $T_G$ is nondecreasing and right-continuous, Corollary~\ref{Cor:CharacterizationIncreasingMeasurability} implies that $h$ is $T_G$-integrable. Hence, by \cite[Theorem~4.2]{CanKau}, $H\in \mathrm{NBV}(K)$.

We claim that $H=G$. Indeed, for every $x\in K$,
\[
G(x)=\mu_G([0_K,x])=\int_{[0_K,x]} h\,d\mu_{T_G}=\int_{0_K}^x h\,dT_G=H(x).
\]

Next, let $f:K\to\mathbb R$ be $G$-integrable and bounded. Since $H=G$, by Lemma~\ref{Lem:Substitution} applied to $H$, we obtain that $fh$ is $T_G$-integrable and
\[
\int_K f\,dG=\int_K f\,dH=\int_K fh\,dT_G.
\]

By Theorem~\ref{Thm:Main1}, it follows that $fh$ is $\mu_{T_G}$-measurable. Since $h$ is $\mu_{T_G}$-measurable and $|h|=1$ $\mu_{T_G}$-almost everywhere, we have $f=(fh)h$ $\mu_{T_G}$-almost everywhere.

Therefore, $f$ is $\mu_{T_G}$-measurable. Since $\mu_{T_G}=|\mu_G|$ by Theorem~\ref{Thm:VariationMeasure}, we conclude that $f$ is $\mu_G$-measurable.

For the second part, assume that both $f$ and $|f|$ are $G$-integrable. For every $m\in\mathbb N$, the functions $f-m$ and $|f|+m$ are $G$-integrable. Hence, since $H=G$, Lemma~\ref{Lem:Substitution} implies that both $(f-m)h$ and $(|f|+m)h$ are $T_G$-integrable. Moreover,
\[
|(f-m)h|=|f-m|\le |f|+m = |(|f|+m)h|
\]
$\mu_{T_G}$-almost everywhere. Therefore, by \cite[Corollary~4.3]{CanKau}, the function $|f-m|$ is $T_G$-integrable. By Proposition~\ref{Prop:TIntegrableImpliesGIntegrable}, it follows that $|f-m|$ is $G$-integrable.

As a consequence, the function $f_m:K\to\mathbb R$ defined by
\[
f_m(x)=\max\{-m,\min\{f(x),m\}\}
=\frac{f(x)+m-|f(x)-m|}{2}
\]
is $G$-integrable. Since $f_m$ is bounded, the first part implies that $f_m$ is $\mu_G$-measurable for every $m\in\mathbb N$. Since $(f_m)_m$ converges pointwise to $f$, we conclude that $f$ is $\mu_G$-measurable.
\end{proof}

\begin{remark}
We do not know whether Theorem~\ref{Thm:Main2} holds in general for all $G$-integrable functions. Although we were not able to provide a proof or a counterexample, we believe that such a result should be true.
\end{remark}

To conclude the paper, we present an $\mathrm{NBV}(K)$ analogue of Corollary~\ref{Cor:CharacterizationIncreasingMeasurability}.

\begin{corollary}\label{Cor:CharacterizationMeasurability}
Let $K$ be a compact line, and let $G\in \mathrm{NBV}(K)$. Then, for every function $f:K\to\mathbb R$, the following are equivalent:
\begin{enumerate}
    \item $f\in L_1(\mu_G)$;
    \item both $f$ and $|f|$ are $T_G$-integrable.
\end{enumerate}
In this case, $f$ is $G$-integrable and
\[
\int_K f\,dG=\int_K f\,d\mu_G.
\]
\end{corollary}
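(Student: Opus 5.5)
The equivalence will follow from Corollary~\ref{Cor:CharacterizationIncreasingMeasurability} applied to the nondecreasing right-continuous integrator $T_G$ (Proposition~\ref{Prop:VariationDoneRight}), together with the identification $|\mu_G|=\mu_{T_G}$ of Theorem~\ref{Thm:VariationMeasure}. By definition, $f\in L_1(\mu_G)$ means that $f$ is $\mu_G$-measurable and $\int|f|\,d|\mu_G|<\infty$. Here $\mu_G$-measurability means measurability for the completion with respect to $|\mu_G|$. Since $|\mu_G|=\mu_{T_G}$, this says exactly that $f\in L_1(\mu_{T_G})$. Corollary~\ref{Cor:CharacterizationIncreasingMeasurability}, applied to $T_G$, shows this is equivalent to $f$ and $|f|$ both being $T_G$-integrable. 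That settles (1)$\Leftrightarrow$(2), and it also gives $\int_K f\,dT_G=\int_K f\,d\mu_{T_G}$, and likewise for $|f|$.

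Next, under (2), Proposition~\ref{Prop:TIntegrableImpliesGIntegrable} gives that $f$ is $G$-integrable. To be precise, it gives that $f$ is $T_G$- and $(T_G-G)$-integrable. The linearity result \cite[Proposition~3.11]{CanKau} then yields
\[
\int_K f\,dG=\int_K f\,dT_G-\int_K f\,d(T_G-G).
\]
The proof of Proposition~\ref{Prop:TIntegrableImpliesGIntegrable} also shows $f\in L_1(\mu_{T_G-G})$, because $\mu_{T_G-G}\le\mu_{T_G}$ on Borel sets. So Corollary~\ref{Cor:CharacterizationIncreasingMeasurability} applies to both nondecreasing integrators $G_1=T_G$ and $G_2=T_G-G$, giving $\int_K f\,dG_i=\int_K f\,d\mu_{G_i}$. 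Hence
\[
\int_K f\,dG=\int_K f\,d\mu_{G_1}-\int_K f\,d\mu_{G_2}=\int_K f\,d\mu_G,
\]
where the last equality uses that $G\mapsto\mu_G$ is linear, so $\mu_G=\mu_{G_1}-\mu_{G_2}$, and that the Lebesgue integral is linear in the measure for $f$ integrable against both.

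One technical point remains. Corollary~\ref{Cor:CharacterizationIncreasingMeasurability} is stated for nondecreasing right-continuous $G$, and its proof shifts $G$ to make it positive. Shifting $G_2$ by a constant changes both the $G$-integral and the induced measure only through the atom at $0_K$, and it does so consistently, since $\mu([0_K,0_K])=G(0_K)$ and the Riemann sums contain the term $f(0_K)G(0_K)$. So I will just invoke the corollary as stated.

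The main obstacle is justifying $f\in L_1(\mu_{G_2})$ and the splitting of the integral. Measurability with respect to the $\mu_{T_G}$-completion implies measurability with respect to the $\mu_{G_2}$-completion, because $\mu_{G_2}\le\mu_{T_G}$ means $\mu_{T_G}$-null sets are $\mu_{G_2}$-null. I will carry out this check carefully, since it is where the domination argument from Proposition~\ref{Prop:TIntegrableImpliesGIntegrable} is reused.
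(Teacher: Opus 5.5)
Your proposal is correct and follows the paper's proof essentially step by step. The equivalence (1)$\Leftrightarrow$(2) comes from Corollary~\ref{Cor:CharacterizationIncreasingMeasurability} applied to $T_G$ together with $|\mu_G|=\mu_{T_G}$, and the integral identity comes from the decomposition $G=T_G-(T_G-G)$, Proposition~\ref{Prop:TIntegrableImpliesGIntegrable}, and Corollary~\ref{Cor:CharacterizationIncreasingMeasurability} applied to each nondecreasing part.

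One small correction to the domination step you reuse from the proof of Proposition~\ref{Prop:TIntegrableImpliesGIntegrable}. The inequality $\mu_{T_G-G}\le\mu_{T_G}$, which that proof also asserts, is false in general: for $G(x)=-x$ on $[0,1]$ one gets $\mu_{T_G-G}=2\mu_{T_G}$. However, $\mu_{T_G-G}=|\mu_G|-\mu_G\le 2\mu_{T_G}$, so your conclusions that $\mu_{T_G}$-null sets are $\mu_{T_G-G}$-null and that $f\in L_1(\mu_{T_G-G})$ still hold unchanged.
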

\begin{proof}
Since $|\mu_G|=\mu_{T_G}$, the equivalence between (1) and (2) follows from Corollary~\ref{Cor:CharacterizationIncreasingMeasurability}. 

To verify the identity between the integrals, note that if $f$ and $|f|$ are both $T_G$-integrable, then by Proposition~\ref{Prop:TIntegrableImpliesGIntegrable}, they are also $(T_G-G)$-integrable. Letting $G_1=T_G$ and $G_2=T_G-G$, Proposition~\ref{Prop:VariationDoneRight} ensures that both $G_1$ and $G_2$ are nondecreasing and right-continuous.

Applying Corollary~\ref{Cor:CharacterizationIncreasingMeasurability} once more, we obtain $f\in L_1(\mu_{G_1})\cap L_1(\mu_{G_2})$, and hence
\[
\int_K f\, d\mu_G=\int_K f\, d\mu_{G_1}-\int_K f\, d\mu_{G_2}=\int_K f\, dG_1-\int_K f\, dG_2=\int_K f\, dG.
\]
\end{proof}


\begin{thebibliography}{99}

\bibitem{Bartle}
R. G. Bartle,
\emph{A Modern Theory of Integration},
Grad. Stud. Math., Vol. 32,
Amer. Math. Soc., Providence, RI, 2001.

\bibitem{BohnerPeterson}
M. Bohner and A. Peterson,
\emph{Dynamic Equations on Time Scales: An Introduction with Applications},
Model. Simul. Sci. Eng. Technol.,
Birkh\"auser Boston, Boston, MA, 2001.
doi:10.1007/978-1-4612-0201-1.

\bibitem{BianconiKaufmann}
R. Bianconi and P. L. Kaufmann,
\emph{Triangle integral---a nonabsolute integration process suitable for piecewise linear surfaces},
Real Anal. Exchange
36 (2010), no. 2, 373--404.

\bibitem{CanKau}
L. Candido and P. L. Kaufmann,
\emph{Kurzweil--Stieltjes integration on compact lines},
Positivity
30 (2026), Art.~6.
doi:10.1007/s11117-025-01161-9.

\bibitem{Engelking}
R. Engelking,
\emph{General Topology},
Sigma Ser. Pure Math., Vol. 6,
Heldermann, Berlin, 1989.

\bibitem{Halmos}
P. R. Halmos,
\emph{Measure Theory},
Grad. Texts in Math., Vol. 18,
Springer, New York, 1974.

\bibitem{Henstock1968}
R. Henstock,
\emph{A Riemann-type integral of Lebesgue power},
Can. J. Math.
20 (1968), 79--87.

\bibitem{KurtzSwartz}
D. S. Kurtz and C. W. Swartz,
\emph{Theories of Integration: The Integrals of Riemann, Lebesgue, Henstock--Kurzweil, and McShane},
Ser. Real Anal., Vol. 9,
World Scientific, Singapore, 2004.

\bibitem{Kurzweil1957}
J. Kurzweil,
\emph{Generalized ordinary differential equations and continuous dependence on a parameter},
Czechoslovak Math. J.
7 (1957), no. 3, 418--449.

\bibitem{MST}
G. A. Monteiro, A. Slav\'ik, and M. Tvrd\'y,
\emph{Kurzweil--Stieltjes Integral: Theory and Applications},
Ser. Real Anal., Vol. 17,
World Scientific, Singapore, 2019.
doi:10.1142/11294.

\bibitem{Lee}
T. Y. Lee,
\emph{Henstock--Kurzweil Integration on Euclidean Spaces},
Ser. Real Anal., Vol. 12,
World Scientific, Singapore, 2011.
doi:10.1142/7881.

\bibitem{Pfeffer}
W. F. Pfeffer,
\emph{The Riemann Approach to Integration: Local Geometric Theory},
Cambridge Tracts Math., Vol. 109,
Cambridge Univ. Press, Cambridge, 1993.

\bibitem{PetersonThompson}
A. Peterson and B. Thompson,
\emph{Henstock--Kurzweil delta and nabla integrals},
J. Math. Anal. Appl.
323 (2006), no. 1, 162--178.
doi:10.1016/j.jmaa.2005.10.025.

\bibitem{PousoRodriguez}
R. L. Pouso and A. Rodr\'iguez,
\emph{A new unification of continuous, discrete and impulsive calculus through Stieltjes derivatives},
Real Anal. Exchange
40 (2015), no. 2, 319--354.

\bibitem{Ronchim}
V. dos S. Ronchim,
\emph{A Study in Set-Theoretic Functional Analysis: Extensions of $c_0(I)$-valued Operators on Linearly Ordered Compacta and Weaker Forms of Normality on Psi-Spaces},
Ph.D. Thesis,
Univ. S\~ao Paulo, 2021.

\bibitem{RonchimTausk} V. dos S. Ronchim and D. V. Tausk, 
\emph{Extension of $c_0(I)$-valued operators on spaces of continuous functions on compact lines}, Studia Mathematica, 268(2023), pp. 259--289.

\end{thebibliography}
\end{document}